\newcommand{\D}{\displaystyle}
\newcommand{\rex}{{\rm e}}
\newtheorem{thm}{Theorem}[section]
\newtheorem{lem}[thm]{Lemma}
\newtheorem{prop}[thm]{Proposition}
\theoremstyle{definition}
\theoremstyle{remark}
\newtheorem{rem}[thm]{Remark}
\numberwithin{equation}{section}
\newcommand{\al}{\alpha}
\newcommand{\bt}{\beta}
\begin{document}

\title{Painlev\'e V and a Pollaczek-Jacobi type orthogonal polynomials}
\date{}
\author{Yang Chen \\
Department of Mathematics, Imperial College London, \\
180 Queens Gates, London SW7 2BZ, UK. \\
ychen@ic.ac.uk \\
and \\
Dan Dai \\
Department of Mathematics, City University of Hong Kong, \\
Tat Chee Avenue, Kowloon, Hong Kong.\\
and \\
Department of Mathematics, Katholieke Universiteit Leuven, \\
Celestijnenlaan 200 B, 3001 Leuven, Belgium. \\
dandai@cityu.edu.hk} \maketitle


\begin{abstract}
We study a sequence of polynomials orthogonal with respect to a one
parameter family of weights
$$
w(x):=w(x,t)=\rex^{-t/x}\:x^{\al}(1-x)^{\bt},\quad t\geq 0,
$$
defined for $x\in[0,1].$ If $t=0,$ this reduces to a shifted Jacobi
weight. Our ladder operator formalism and the associated
compatibility conditions give an easy determination of the
recurrence coefficients.

For $t>0,$ the factor $\rex^{-t/x}$ induces an infinitely strong
zero at $x=0.$ With the aid of the compatibility conditions, the
recurrence coefficients are expressed in terms of a set of auxiliary
quantities that satisfy a system of difference equations. These,
when suitably combined with a pair of Toda-like equations derived
from the orthogonality principle, show that the auxiliary quantities
are a particular Painlev\'e V and/or allied functions.

It is also shown that the logarithmic derivative of the Hankel
determinant,
$$
D_n(t):=\det\left(\int_{0}^{1} x^{i+j}
\:\rex^{-t/x}\:x^{\al}(1-x)^{\bt}dx\right)_{i,j=0}^{n-1},
$$
satisfies the Jimbo-Miwa-Okamoto $\sigma-$form of the Painlev\'e V
and that the same quantity satisfies a second order non-linear
difference equation which we believe to be new.
\end{abstract}


\section{Introduction}

For polynomials orthogonal with respect to weights $w$ absolutely
continuous on $[-1,1]$ and
 satisfies the Szeg\H{o} condition,
$$
\int_{-1}^{1}\frac{|\ln w(x)|}{\sqrt{1-x^2}}dx<\infty,
$$
a general theory of Szeg\H{o} \cite[296-312]{s} gives a
comprehensive description of the large $n$ behavior of the
polynomials both for $x\in(-1,1)$ and $x\notin[-1,1]$ and the
recurrence coefficients; see also \cite{Ge}. For a recent account
and extension of the Szeg\H{o}'s theory, see \cite{Ne} and
\cite{Den,Ne1}.

With the introduction of a deformation parameter $t\geq 0,$ we have
a Pollaczek-Jacobi type weight defined as
\begin{equation} \label{w-def}
w(x):= w(x;t) = \rex^{-t/x} x^\alpha (1-x)^\beta, \qquad x \in
[0,1], \;\; \alpha>0, \;\;\beta>0,
\end{equation}
which violates the Szeg\H{o} condition. For convenience we have
taken the interval of orthogonality to be $[0,1].$ The weight
function of the Pollaczek polynomial and a generalization due to
Szeg\H{o} behaves as
$$
\exp\left(-\frac{c}{\sqrt{1-x^2}}\right),{\;\;\rm as\;}\;\;x\to \pm
1,
$$
where $c$ is a positive constant; see \cite[393-400]{s} for a
detailed description. We should like to mention, as a brief guide to
the reader, some recent literature on the Pollaczek polynomials; see
\cite{van} for the asymptotic behavior of the polynomials for
$x\notin[-1,1]$; \cite{Bo, Zhou} for the asymptotic behavior of
their zeros and \cite{Fink,mun,Wat} for applications to physical
problems. Regarding weighted polynomial approximation in $L_p$ with
respect to a class of exponential weights on $[-1,1]$ that violate
the Szeg\"o conditions, see \cite{dem,lu}.

Note that our weight is in some sense more ``singular" since the
Szeg\H{o} condition is strongly violated.

The purpose of this paper is to give a complete description of the
recurrence coefficients of the associated orthogonal polynomials. As
can be seen later these are expressed in terms of a set auxiliary
quantities which are ultimately a particular Painlev\'e V and allied
functions.

In section 2, with the aid of certain supplementary conditions
$(S_1),$ $(S_2)$ and $(S_2')$ derived from a pair of operators and
the
 recurrence relations, we obtain a system of difference
equations satisfied by certain auxiliary quantities
($R_n,\;R_n^*,\;r_n,\;r_n^*$) and the recurrence coefficients
($\al_n,\;\bt_n).$ More importantly, the equation $(S_2'),$ in some
sense is the ``first integral" of $(S_1)$ and $(S_2),$ and
automatically performs a sum, in finite term, of $R_j^*$ from $j=0$
to $j=n-1$ (see (\ref{Rsum})). This turns out to be the logarithmic
derivative of the Hankel determinant (generated by our
Pollaczek-Jacobi type weight) with respect to $t$; see
(\ref{dn-R*}).

We should mention here similar approach was adopted in
\cite{basor-chen,ci} where $(S_1)$ and $(S_2')$ were sufficient for
the purpose. However, for the problem at hand, taking into account
of the difference equations there are ultimately three auxiliary
quantities $R_n,\;r_n^*$ and $r_n$. The equation $(S_2)$ turns out
to be crucial for later development.

In section 3, we make use of the results of section 2 to express
$\al_n$ and $\bt_n,$ in terms of the auxiliary quantities and show
that these reduce to recurrence coefficients of the ``shifted"
Jacobi polynomials when $t=0$ through an easy computation.

The $t$ dependence of the recurrence coefficients and the auxiliary
quantities is derived in section 4 resulting a pair of Toda-like
equations.

In section 5, combining the results from previous sections we
express $r_n^*,\;r_n$ and $R_n$ in term of
\begin{equation} \label{hn-def}
H_n(t):=t\frac{d}{dt}\ln D_n(t)
\end{equation}
and $H_n'(t).$ We show that a functional equation $f(H_n,
H_n',H_n'')=0$ resulting from eliminating the auxiliary quantities
in favor of $H_n$ and $H_n'$ is the Jimo-Miwa-Okamoto $\sigma-$form
of a Painlev\'e V equation.

In section 6, from the difference equations found in section 2 and
the expression of $\al_n$ and $\bt_n$ found in section 3 we express
$R_n,$ $\bt_n$ in terms of $\textsf{p}_1(n),$ the coefficients of
$z^{n-1}$ of our monic polynomials $P_n(z)$. And since
$\textsf{p}_1(n)$ is easily related to $H_n,$ the resulting
functional equation $g(H_{n},H_{n+1},H_{n-1})=0$ is the discrete
analog of the $\sigma-$form mentioned in the abstract; see
(\ref{final-difference}). We believe this equation is new.

In section 7 we derive a second order o.d.e. satisfied by $R_n$
which is also a Painlev\'e V since $H_n$
 is shown to satisfies the $\sigma-$form.

The large $n$ behavior of the recurrence coefficients will be
described in a future publication.


\section{Preliminaries}

Let $P_n(x)$ be the monic polynomials of degree $n$ in $x$ and
orthogonal with respect to the weight function $w(x;t)$ defined in
(\ref{w-def}), that is
\begin{equation} \label{orthogonality}
\int_0^1 P_m(x) P_n(x) w(x;t) dx = h_n \delta_{m,n}.
\end{equation}
(The polynomials $P_n(x)$ and the constant $h_n$ all depend on $t$,
but we suppress the dependence for brevity.) An immediate
consequence of the orthogonality condition is recurrence relation
\begin{equation} \label{recurrence}
x P_n(x) = P_{n+1}(x) + \alpha_n P_n(x) + \beta_n P_{n-1}(x), \qquad
n=0,1,\cdots.
\end{equation}
We take the ``initial" conditions to be $P_0(z) := 1$ and $\beta_0
P_{-1}(z) :=0$. Then it is easily seen that $P_n(z)$ has the
following form
\begin{equation} \label{pn-formula}
P_n(z) = z^n + \textsf{p}_1(n) z^{n-1} + \cdots.
\end{equation}
Substituting (\ref{pn-formula}) into (\ref{recurrence}) we see that
\begin{equation} \label{alpha-p1n}
\alpha_n = \textsf{p}_1(n) - \textsf{p}_1(n+1).
\end{equation}
Taking a telescopic sum of above equation and noting that
$\textsf{p}_1(0):=0,$ we have
\begin{equation} \label{alphasum-p1n}
-\sum_{j=0}^{n-1}\al_j=\textsf{p}_1(n).
\end{equation}
The Hankel determinant generated by our weight is
\begin{align} \label{dn-def}
D_n(t):=&\det\left(\mu_{j+k}(t)\right)_{j,k=0}^{n-1}\nonumber\\
=&\prod_{j=0}^{n-1} h_j,
\end{align}
where
\begin{align}
\mu_{k}(t):=&\int_{0}^{1}x^{k} e^{-t/x}x^{\al}(1-x)^{\beta}dx\nonumber\\
=& \: e^{-t}\:\Gamma(1+\bt)U(1+\bt,-\al-k,t)\nonumber
\end{align}
and $U(a,b,z)$ the Kummer function of the second kind; see
\cite{slater}.

The Hankel determinant will turn out to play an important role in
our determination of $\alpha_n$ and $\beta_n$ for the weight given
by (\ref{w-def}). Furthermore, through several auxiliary variables
(which naturally appear in the theory) we obtain expressions of
$\al_n$ and $\bt_n$ terms of $H_n(t)$ given in (\ref{hn-def}) and
its derivatives with respect to $t$. The equations
(\ref{orthogonality})--(\ref{dn-def}) can be found in Szeg\H{o}'s
treatise \cite{s} on orthogonal polynomials.

If $w(x)$, the weight function, is Lipschitz continuous then the
following ladder operator relations hold
\begin{align}
\left( \frac{d}{dz} + B_n(z) \right) P_n(z) & = \beta_n A_n(z) P_{n-1}(z), \label{ladder1} \\
\left( \frac{d}{dz} - B_n(z) - \textsf{v}'(z) \right) P_{n-1}(z) & =
- A_{n-1}(z) P_n(z) \label{ladder2}
\end{align}
with
\begin{align}
A_n(z) & := \frac{1}{h_n} \int_0^1 \frac{\textsf{v}'(z) -
\textsf{v}'(y)}{z-y} \ [P_n(y)]^2 w(y) dy,
\label{an-def}\\
B_n(z) & := \frac{1}{h_{n-1}} \int_0^1 \frac{\textsf{v}'(z) -
\textsf{v}'(y)}{z-y} \ P_{n-1}(y) P_n(y) w(y) dy \label{bn-def}
\end{align}
and $\textsf{v}(z):=-\ln w(z).$

Note that, as a consequence of the recurrence relation and the
Christoffel-Darboux formula, $A_n(z)$ and $B_n(z)$ are not
independent but must satisfy the following supplementary conditions
valid for $z\in\mathbb{C}\cup \{ \infty \}.$
\begin{thm}
The functions $A_n(z)$ and $B_n(z)$ defined by (\ref{an-def}) and
(\ref{bn-def}) satisfy the identities,
$$
B_{n+1}(z) + B_n(z)  = (z- \alpha_n) A_n(z) - \textsf{v}'(z),
\eqno(S_1)
$$
$$
1+ (z- \al_n) [B_{n+1}(z) - B_n(z)] = \beta_{n+1} A_{n+1}(z) -
\beta_n A_{n-1}(z).\eqno(S_2)
$$
\end{thm}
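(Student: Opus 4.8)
The plan is to derive both identities $(S_1)$ and $(S_2)$ directly from the integral representations \eqref{an-def} and \eqref{bn-def}, using only the recurrence relation \eqref{recurrence}, the orthogonality \eqref{orthogonality}, and the Christoffel--Darboux formula. The key observation is that the difference quotient $\bigl(\textsf{v}'(z)-\textsf{v}'(y)\bigr)/(z-y)$ is the common kernel in both $A_n$ and $B_n$; when we insert a polynomial identity in $y$ into the numerator and use orthogonality, most terms collapse.

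First I would treat $(S_1)$. In the defining integral for $B_{n+1}(z)+B_n(z)$, I would write $P_{n}P_{n+1}/h_n + P_{n-1}P_n/h_{n-1}$ and use the recurrence \eqref{recurrence} to replace $P_{n+1}(y)=(y-\alpha_n)P_n(y)-\beta_n P_{n-1}(y)$ and, in the second term, $\beta_n P_{n-1}(y)=(y-\alpha_n)P_n(y)-P_{n+1}(y)$ (equivalently, use $h_n=\beta_n h_{n-1}$). After combining, the integrand's polynomial part becomes $\tfrac{1}{h_n}(y-\alpha_n)[P_n(y)]^2$ times the kernel, plus cross terms that vanish by orthogonality. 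Splitting $y-\alpha_n=(y-z)+(z-\alpha_n)$: the piece $(y-z)$ cancels the denominator of the kernel, leaving $\tfrac{1}{h_n}\int_0^1\bigl(\textsf{v}'(z)-\textsf{v}'(y)\bigr)[P_n(y)]^2 w(y)\,dy$, which is $\textsf{v}'(z)\cdot 1 - 0$ by orthogonality (the $P_n^2$-average of a constant is $h_n$, and $\textsf{v}'(y)$ against $P_n^2$ is handled by expanding $\textsf{v}'(y)$; actually $\textsf{v}'(y)$ is not polynomial here, so I must be careful — instead I would note $\int_0^1 \textsf{v}'(y)[P_n(y)]^2w(y)\,dy = -\int_0^1 [P_n^2 w]'(y)\,dy + \text{boundary} = 2\int P_n P_n' w + \text{boundary}$, but the cleaner route is simply that the whole kernel-free integral reconstitutes $-A_n$ after adding back). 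The remaining piece $(z-\alpha_n)$ times the kernel integral gives exactly $(z-\alpha_n)A_n(z)$, and collecting the $\textsf{v}'(z)$ term yields $(S_1)$. I would organize this so the only analytic input is orthogonality plus the elementary fact that $\int_0^1 \frac{d}{dy}\bigl(\textsf{v}'(z)-\textsf{v}'(y)\bigr)^{-1}\cdots$ type boundary contributions vanish because $w$ is Lipschitz and $P_n^2 w$ decays at the endpoints.

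For $(S_2)$ the same mechanism applies to $B_{n+1}(z)-B_n(z)$, but now the recurrence produces $\beta_{n+1}A_{n+1}$ and $\beta_n A_{n-1}$ on the right: inserting $P_{n+1}$ and $P_{n-1}$ via \eqref{recurrence} turns the $B$-integrals into integrals of $[P_{n+1}]^2$ and $[P_{n-1}]^2$ against the kernel, i.e.\ into $A_{n+1}$ and $A_{n-1}$; the Christoffel--Darboux formula is what guarantees the mixed $P_{n+1}P_{n-1}$ cross terms assemble correctly and the ``$1$'' on the left-hand side emerges from the $(z-\alpha_n)$-weighted difference (this ``$1$'' is essentially $\tfrac{h_n}{h_{n-1}\beta_n}$). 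Concretely, I would multiply the known relation $(S_1)$ for index $n$ and for index $n+1$ appropriately, or — cleaner — compute $(z-\alpha_n)[B_{n+1}(z)-B_n(z)]$ directly from the integral and match. The main obstacle I anticipate is bookkeeping: tracking which orthogonality relations kill which cross terms, and verifying that the constant ``$1$'' in $(S_2)$ comes out with the right normalization (this is where $h_n=\beta_n h_{n-1}$ and the leading-coefficient identity \eqref{alpha-p1n} must be used precisely). A secondary subtlety is justifying the vanishing of boundary terms at $x=0$ and $x=1$; for $t>0$ the factor $\rex^{-t/x}$ makes this trivial at $x=0$, and $(1-x)^\beta$ with $\beta>0$ handles $x=1$, so the Lipschitz hypothesis stated before the theorem is exactly what is needed — though one should remark that $\textsf{v}'(z)=t/z^2-\alpha/z+\beta/(1-z)$ is rational, so the difference quotient is itself rational in $y$ and the ``insert a polynomial identity'' step must instead be phrased as a partial-fraction / residue computation, which is the one genuinely non-automatic part of the argument.
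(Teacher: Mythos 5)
The paper itself does not prove this theorem: it simply cites earlier derivations (Bonan--Clark, Bonan--Lubinsky--Nevai, Shohat, Chen--Ismail), so your plan to derive $(S_1)$ and $(S_2)$ directly from \eqref{an-def}--\eqref{bn-def} is a different, legitimate route. Your treatment of $(S_1)$ is essentially the standard one and is sound: combine the two $B$-integrals via \eqref{recurrence} and $h_n=\beta_n h_{n-1}$ into $\frac{1}{h_n}\int_0^1\frac{\textsf{v}'(z)-\textsf{v}'(y)}{z-y}(y-\alpha_n)[P_n(y)]^2w(y)\,dy$, split $y-\alpha_n=(y-z)+(z-\alpha_n)$, and use $\int_0^1 \textsf{v}'(y)[P_n(y)]^2w(y)\,dy=\int_0^1 2P_n'(y)P_n(y)w(y)\,dy=0$ (integration by parts plus orthogonality; boundary terms vanish because $w$ vanishes at both endpoints).

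For $(S_2)$, however, there is a genuine gap exactly at the point you dismiss as bookkeeping. Writing $(z-\alpha_n)=(z-y)+(y-\alpha_n)$ in $(z-\alpha_n)[B_{n+1}(z)-B_n(z)]$, the $(y-\alpha_n)$ piece gives, after \eqref{recurrence}, $\frac{[P_{n+1}]^2}{h_n}-\beta_n\frac{[P_{n-1}]^2}{h_{n-1}}$ against the kernel (hence $\beta_{n+1}A_{n+1}-\beta_n A_{n-1}$, using $1/h_n=\beta_{n+1}/h_{n+1}$) plus cross terms proportional to $P_{n+1}P_{n-1}\bigl(\frac{\beta_n}{h_n}-\frac{1}{h_{n-1}}\bigr)$, which vanish identically because $\beta_n=h_n/h_{n-1}$; no Christoffel--Darboux input is involved, so attributing the cancellation to CD is a misdiagnosis. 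More importantly, the constant $1$ does not come from ``essentially $h_n/(h_{n-1}\beta_n)$'' (that ratio is trivially $1$ and explains nothing); it comes from the $(z-y)$ piece, i.e.\ the kernel-free integral $\int_0^1[\textsf{v}'(z)-\textsf{v}'(y)]\bigl[\frac{P_nP_{n+1}}{h_n}-\frac{P_{n-1}P_n}{h_{n-1}}\bigr]w\,dy$: the $\textsf{v}'(z)$ part dies by orthogonality, while integration by parts gives $\int_0^1\textsf{v}'(y)P_nP_{n+1}w\,dy=(n+1)h_n$ and $\int_0^1\textsf{v}'(y)P_{n-1}P_nw\,dy=n\,h_{n-1}$ (the leading coefficient of $P_{n+1}'$ is $n+1$), so this piece contributes $-(n+1)+n=-1$, which is precisely the $1$ in $(S_2)$. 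Without this computation the identity does not close. Your fallback of ``multiplying $(S_1)$ at indices $n$ and $n+1$ appropriately'' cannot work either: $(S_2)$ is not an algebraic consequence of $(S_1)$, which is why the paper insists both conditions carry independent information. Two minor points: the $t$-term of $\textsf{v}'(z)$ is $-t/z^2$, cf.\ \eqref{vz'}; and no partial-fraction or residue reformulation is needed anywhere, since the only place the non-polynomiality of $\textsf{v}'$ enters is through integrals of the form $\int_0^1\textsf{v}'(y)(\cdot)\,w\,dy$, which the integration-by-parts step already handles.
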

\begin{proof}
The ladder operator relations (\ref{ladder1}) and (\ref{ladder2})
and the supplementary conditions $(S_1)$ and $(S_2)$ have been
derived by many authors in different forms over the years
\cite{BonanC,BonanLN,Bonan,sh}. Also, see \cite{ci1,ci2} for a
recent proof.
\end{proof}

It turns out that there is another identity involving
$\sum_{j=0}^{n-1}A_j$ which will provide further insight into the
determination of $\al_n$ and $\bt_n.$ We state the result in the
following Theorem.

\begin{thm}

$$
B_n^2(z) + \textsf{v}'(z) B_n(z) + \sum_{j=0}^{n-1}A_j(z) =
\beta_{n} A_n(z) A_{n-1}(z). \eqno(S_2')
$$

\end{thm}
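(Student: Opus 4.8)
The plan is to establish $(S_2')$ by combining $(S_1)$ and $(S_2)$ in a telescoping fashion, in the spirit of the derivation in \cite{basor-chen,ci}. First I would multiply $(S_1)$ (written with index $n$) by $A_n(z)$ and try to recognize the right-hand side, $[(z-\alpha_n)A_n(z) - \textsf{v}'(z)]A_n(z) = [B_{n+1}(z)+B_n(z)]A_n(z)$, as something that can be fed into $(S_2)$. The natural move is to note that $(z-\alpha_n)A_n(z) = B_{n+1}(z)+B_n(z)+\textsf{v}'(z)$ lets one rewrite $(S_2)$, after multiplying it through by $A_n(z)$ is not quite right; instead I would multiply $(S_2)$ by something that produces a difference of the quantity $\beta_n A_n(z)A_{n-1}(z)$ at consecutive indices. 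Concretely, set
\begin{equation}\nonumber
\Phi_n(z) := B_n^2(z) + \textsf{v}'(z)B_n(z) - \beta_n A_n(z)A_{n-1}(z),
\end{equation}
and aim to show $\Phi_{n+1}(z) - \Phi_n(z) = -A_n(z)$, together with the initial value $\Phi_0(z)=0$ (which holds since $B_0 \equiv 0$ and $\beta_0 A_0 A_{-1}\equiv 0$ by the initial conventions). Summing this telescoping relation from $0$ to $n-1$ then yields $\Phi_n(z) = -\sum_{j=0}^{n-1} A_j(z)$, which is exactly $(S_2')$.

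The core computation is therefore the one-step identity $\Phi_{n+1}-\Phi_n = -A_n$. I would expand
\begin{equation}\nonumber
\Phi_{n+1} - \Phi_n = (B_{n+1}^2 - B_n^2) + \textsf{v}'(B_{n+1}-B_n) - \beta_{n+1}A_{n+1}A_n + \beta_n A_n A_{n-1},
\end{equation}
factor $B_{n+1}^2 - B_n^2 = (B_{n+1}+B_n)(B_{n+1}-B_n)$, and substitute $B_{n+1}+B_n = (z-\alpha_n)A_n - \textsf{v}'$ from $(S_1)$. The $\textsf{v}'(B_{n+1}-B_n)$ terms cancel against the $-\textsf{v}'(B_{n+1}-B_n)$ produced, leaving $(z-\alpha_n)A_n(B_{n+1}-B_n)$. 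For the last two terms I would factor out $A_n$: $-\beta_{n+1}A_{n+1} + \beta_n A_{n-1} = A_n\cdot\big({-}[\beta_{n+1}A_{n+1} - \beta_n A_{n-1}]/A_n\big)$ — better, pull $A_n$ out after invoking $(S_2)$ in the form $\beta_{n+1}A_{n+1} - \beta_n A_{n-1} = 1 + (z-\alpha_n)(B_{n+1}-B_n)$. Then
\begin{equation}\nonumber
\Phi_{n+1}-\Phi_n = (z-\alpha_n)A_n(B_{n+1}-B_n) - A_n\big[1 + (z-\alpha_n)(B_{n+1}-B_n)\big] = -A_n,
\end{equation}
as desired.

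The main obstacle I anticipate is not the algebra per se but justifying the telescoping at the level of the auxiliary functions: one must check that $A_n(z)$, $B_n(z)$ are genuinely rational in $z$ (with the correct behavior at $z=\infty$, so that both sides are the same rational function once they agree as formal identities), and that the initializations $B_0(z)\equiv 0$ and $\beta_0 A_{-1}(z)P_0 \equiv 0$ are consistent with the definitions \eqref{an-def}--\eqref{bn-def} when $n=0$. In particular $B_0(z) = \frac{1}{h_{-1}}\int_0^1 \frac{\textsf{v}'(z)-\textsf{v}'(y)}{z-y}P_{-1}(y)P_0(y)w(y)dy$ must be read via the convention $\beta_0 P_{-1} := 0$, so the $n=0$ instance of the defining integrals and of $(S_1)$, $(S_2)$ should be verified directly to anchor the induction. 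Once that base case is pinned down, the identity follows purely from $(S_1)$ and $(S_2)$ as above.
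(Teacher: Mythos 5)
Your proposal is correct and is essentially the paper's own argument: the paper likewise multiplies $(S_2)$ by $A_n(z)$, uses $(S_1)$ to replace $(z-\alpha_n)A_n(z)$, obtains exactly your one-step identity $B_{n+1}^2-B_n^2+\textsf{v}'(B_{n+1}-B_n)+A_n=\beta_{n+1}A_{n+1}A_n-\beta_nA_nA_{n-1}$, and telescopes from $0$ to $n-1$ with the initial conditions $B_0(z)=0$ and $\beta_0A_{-1}(z)=0$ (justified there, as you anticipated, by reading $P_{n-1}/h_{n-1}$ as $\beta_nP_{n-1}/h_n$ together with the convention $\beta_0P_{-1}:=0$). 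Your packaging via $\Phi_n$ is only a cosmetic reformulation of the same telescoping sum.
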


\begin{proof}

This can be obtained as follows: First, we multiply $(S_2)$ by
$A_n(z)$ and replace $(z-\al_n)A_n(z)$ in the resulting equation by
$B_{n+1}(z)+B_n(z)+\textsf{v}'(z)$ with $(S_1)$ to get
$$
B_{n+1}^2(z) - B_n^2(z) + \textsf{v}'(z) (B_{n+1}(z) - B_n(z))+
A_n(z) = \beta_{n+1} A_{n+1}(z) A_n(z) - \beta_n A_n(z) A_{n-1}(z) .
$$
Taking a sum of the above equation from 0 to $n-1$, we obtain our
Theorem with the initial conditions $B_0(z)=0$ and
$\bt_{0}A_{-1}(z)=0.$

Let us explain a little more about the way we get the above initial
conditions. In (\ref{bn-def}) we re-write
$$
\frac{P_{n-1}(y)}{h_{n-1}},
$$
as
$$
\frac{\bt_n}{h_n}P_{n-1}(y).
$$
Consequently $B_0(z)=0,$ since $\bt_0P_{-1}(y)=0$ according to the
initial condition associated with the recurrence relations
(\ref{recurrence}). $\bt_0 A_{-1}(z)=0$ for the same reason.
\end{proof}

For the problem at hand,
\begin{equation}
\textsf{v}(z):= - \ln w(z) = \frac{t}{z} - \alpha \ln{z} - \beta
\ln(1-z).
\end{equation}
Then we immediately have
\begin{equation} \label{vz'}
\textsf{v}'(z) = -  \frac{t}{z^2} - \frac{\alpha}{z} -
\frac{\beta}{z-1}
\end{equation}
and
\begin{equation} \label{vz'2}
\frac{\textsf{v}'(z) - \textsf{v}'(y)}{z-y} = \frac{t}{z^2 y} +
\frac{\alpha y + t}{z y^2} + \frac{\beta}{(z-1)(y-1)}.
\end{equation}
Substituting the above formula into the definitions of $A_n(z)$ and
$B_n(z)$ in (\ref{an-def}) and (\ref{bn-def}), we have the following
proposition.

\begin{prop}
We have
\begin{align}
A_n(z) & = \frac{R^*_n}{z^2} + \frac{R_n}{z} - \frac{R_n}{z-1}, \label{an-new}\\
B_n(z) & = \frac{r^*_n}{z^2} - \frac{n - r_n}{z} - \frac{r_n}{z-1},
\label{bn-new}
\end{align}
where
\begin{align}
R^*_n & := \frac{t}{h_n} \int_0^1 [P_n(y)]^2 w(y) \frac{dy}{y}, \label{rn1-def}\\
R_n & := \frac{\beta}{h_n} \int_0^1 [P_n(y)]^2 w(y) \frac{dy}{1-y}, \label{r*n1-def} \\
r^*_n & := \frac{t}{h_{n-1}} \int_0^1 P_{n-1}(y) P_n(y) w(y) \frac{dy}{y}, \label{rn2-def} \\
r_n & := \frac{\beta}{h_{n-1}} \int_0^1 P_{n-1}(y) P_n(y) w(y)
\frac{dy}{1-y}. \label{r*n2-def}
\end{align}

\end{prop}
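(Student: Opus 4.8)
The plan is to substitute the partial-fraction identity (\ref{vz'2}) for $\frac{\textsf{v}'(z)-\textsf{v}'(y)}{z-y}$ directly into the definitions (\ref{an-def}) of $A_n(z)$ and (\ref{bn-def}) of $B_n(z)$, and then simplify the three resulting $z$-dependent pieces. Two of the three pieces are read off immediately: in $A_n$ the $\frac{t}{z^2 y}$ term gives $\frac{R_n^*}{z^2}$ and the $\frac{\beta}{(z-1)(y-1)}$ term gives $-\frac{R_n}{z-1}$ (the sign coming from $\frac{1}{y-1}=-\frac{1}{1-y}$), and likewise in $B_n$ one reads off $\frac{r_n^*}{z^2}$ and $-\frac{r_n}{z-1}$. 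The only real task is to identify the coefficient of $z^{-1}$, which a priori equals $\frac1{h_n}\int_0^1[P_n(y)]^2\big(\frac{\alpha}{y}+\frac{t}{y^2}\big)w(y)\,dy$ for $A_n$ and the analogous cross-term integral for $B_n$, and to show that these coincide with $R_n$ and $-(n-r_n)$ respectively.

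The device that accomplishes this is integration by parts. Since $\alpha>0$, $\beta>0$ and $t\ge0$, the weight $w(x;t)=\rex^{-t/x}x^\alpha(1-x)^\beta$, multiplied by any polynomial, vanishes at both endpoints $x=0$ and $x=1$; hence for polynomials $f,g$,
$$
0=\int_0^1\frac{d}{dy}\big(f(y)g(y)w(y)\big)\,dy=\int_0^1\big(f'g+fg'-fg\,\textsf{v}'\big)w\,dy ,
$$
where we used $w'/w=-\textsf{v}'$. Taking $f=g=P_n$, the term $\int_0^1 2P_nP_n'\,w\,dy$ vanishes by orthogonality because $\deg P_n'=n-1<n$, so $\int_0^1[P_n]^2\,\textsf{v}'(y)\,w(y)\,dy=0$; inserting (\ref{vz'}) and dividing by $h_n$ gives
$$
\frac1{h_n}\int_0^1[P_n]^2\Big(\frac{\alpha}{y}+\frac{t}{y^2}\Big)w\,dy=-\frac{\beta}{h_n}\int_0^1\frac{[P_n]^2 w}{y-1}\,dy=R_n ,
$$
which is exactly the coefficient of $z^{-1}$ in $A_n$, and (\ref{an-new}) follows.

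For $B_n$ I would take $f=P_{n-1}$, $g=P_n$ in the same identity. Now $\int_0^1 P_{n-1}'P_n\,w\,dy=0$ since $\deg P_{n-1}'<n$, while $P_n'=nP_{n-1}+(\text{lower degree})$ gives $\int_0^1 P_{n-1}P_n'\,w\,dy=n\,h_{n-1}$ by orthogonality; hence $\int_0^1 P_{n-1}P_n\,\textsf{v}'(y)\,w(y)\,dy=n\,h_{n-1}$, and using (\ref{vz'}) and dividing by $h_{n-1}$ yields
$$
\frac1{h_{n-1}}\int_0^1 P_{n-1}P_n\Big(\frac{\alpha}{y}+\frac{t}{y^2}\Big)w\,dy=-n-\frac{\beta}{h_{n-1}}\int_0^1\frac{P_{n-1}P_n w}{y-1}\,dy=-(n-r_n),
$$
the claimed coefficient of $z^{-1}$ in $B_n$, giving (\ref{bn-new}).

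The one point I would state explicitly is the vanishing of the boundary terms in the integration by parts: this is precisely where the hypotheses $\alpha>0$, $\beta>0$ (and $t\ge0$) are used, and it is the mechanism that forces only the four quantities $R_n^*,R_n,r_n^*,r_n$ (rather than six) to appear and produces the explicit dependence on $n$ in $B_n$. Beyond that, the proof is just bookkeeping with the partial fractions in (\ref{vz'2}) together with elementary orthogonality, so I do not anticipate any genuine obstacle.
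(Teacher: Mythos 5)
Your proposal is correct and follows essentially the same route as the paper: substitute the partial-fraction identity (\ref{vz'2}) into (\ref{an-def})--(\ref{bn-def}), read off the $z^{-2}$ and $(z-1)^{-1}$ pieces, and use integration by parts together with orthogonality (giving $\int_0^1 [P_n]^2\,\textsf{v}'\,w\,dy=0$ and $\int_0^1 P_{n-1}P_n\,\textsf{v}'\,w\,dy=n\,h_{n-1}$) to identify the $z^{-1}$ coefficients as $R_n$ and $-(n-r_n)$. Your explicit remark on the vanishing of the boundary terms (using $\alpha,\beta>0$, $t\ge 0$) is a point the paper leaves implicit, but it is the same argument.
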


\begin{proof}
Using (\ref{vz'2}), (\ref{an-def}) can be rewritten as
\begin{equation} \label{an-1}
\begin{split}
A_n(z) = & \frac{1}{h_n} \left[ \frac{1}{z^2} \int_0^1 [P_n(y)]^2
w(y) \frac{t}{y} dy + \frac{1}{z} \int_0^1
[P_n(y)]^2 w(y) \frac{\alpha y + t}{y^2}  dy \right. \\
& \left. \quad + \frac{1}{z-1} \int_0^1 [P_n(y)]^2 w(y)
\frac{\beta}{y-1} dy \right].
\end{split}
\end{equation}
Applying integration by parts, we have
\begin{equation}
\int_0^1 [P_n(y)]^2 w(y) \textsf{v}'(y) dy = - \int_0^1 [P_n(y)]^2 d
w(y) = \int_0^1 2 P_n'(y) P_n(y) w(y) d y = 0.
\end{equation}
Then it follows from (\ref{vz'}) and the above formula that
\begin{equation} \label{int-p}
\int_0^1 [P_n(y)]^2 w(y) \frac{\alpha y + t}{y^2} dy = - \int_0^1
[P_n(y)]^2 w(y) \frac{\beta}{y-1} dy.
\end{equation}
Combining (\ref{an-1}) and (\ref{int-p}) gives us (\ref{an-new}).

In a very similar way, we get (\ref{bn-new}) from (\ref{bn-def}).
One just needs to take into account the following equality
\begin{equation}
\int_0^1 P_{n-1}(y) P_n(y) w(y) \frac{\alpha y + t}{y^2} dy = -n
h_{n-1} - \int_0^1 P_{n-1}(y) P_n(y) w(y) \frac{\beta}{y-1} dy.
\end{equation}
\end{proof}

Now we have four more auxiliary quantities $R_n, R_n^*, r_n, r_n^*$,
in addition to the two unknowns $\al_n$ and $\bt_n$. However, from
($S_1$), ($S_2$) and ($S_2'$), we obtain relations among these
quantities.

\begin{prop} \label{eqns-r&R}

From ($S_1$), we obtain the following equations
\begin{align}
r^*_{n+1} + r^*_n & = t - \alpha_n R^*_n, \label{r-r1} \\
R^*_n - R_n &  = -2n - 1 - \alpha - \beta, \label{r-r2} \\
r_{n+1} + r_n  & = (1- \alpha_n)R_n - \beta , \label{r-r3}
\end{align}
where the constants $R_n$, $R_n^*$, $r_n$ and $r^*_n$ are defined in
(\ref{rn1-def})--(\ref{r*n2-def}), respectively.

\end{prop}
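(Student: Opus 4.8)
The plan is to substitute the explicit partial-fraction forms (\ref{an-new}) and (\ref{bn-new}) for $A_n(z)$ and $B_n(z)$, together with the expression (\ref{vz'}) for $\textsf{v}'(z)$, into the supplementary condition $(S_1)$, and then to equate coefficients of the linearly independent rational functions $z^{-2}$, $z^{-1}$ and $(z-1)^{-1}$. On the left-hand side this is immediate: $B_{n+1}(z)+B_n(z)$ has $z^{-2}$-coefficient $r^*_{n+1}+r^*_n$, $z^{-1}$-coefficient $(r_{n+1}-(n+1))+(r_n-n)=r_{n+1}+r_n-(2n+1)$, and $(z-1)^{-1}$-coefficient $-(r_{n+1}+r_n)$.

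For the right-hand side I would expand $(z-\al_n)A_n(z)$ term by term. The terms $(z-\al_n)R^*_n/z^2$ and $(z-\al_n)R_n/z$ are trivial; the only point requiring a little care is $-(z-\al_n)R_n/(z-1)$, which I rewrite via $z-\al_n=(z-1)+(1-\al_n)$ so that it splits into the constant $-R_n$ and the simple pole $-(1-\al_n)R_n/(z-1)$. The constant $+R_n$ produced by $(z-\al_n)R_n/z$ then cancels this $-R_n$, so $(z-\al_n)A_n(z)$ has no polynomial part — exactly as it must, since both sides of $(S_1)$ vanish as $z\to\infty$. Adding $-\textsf{v}'(z)$, one finds that the right-hand side equals $\dfrac{t-\al_n R^*_n}{z^2}+\dfrac{R^*_n-\al_n R_n+\al}{z}+\dfrac{\beta-(1-\al_n)R_n}{z-1}$.

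Matching the $z^{-2}$-coefficients gives (\ref{r-r1}) directly, and matching the $(z-1)^{-1}$-coefficients gives (\ref{r-r3}). Matching the $z^{-1}$-coefficients yields $r_{n+1}+r_n-(2n+1)=R^*_n-\al_n R_n+\al$; substituting (\ref{r-r3}) for $r_{n+1}+r_n$ and using $(1-\al_n)R_n+\al_n R_n=R_n$ collapses this to $R_n-(2n+1)-\beta=R^*_n+\al$, which is (\ref{r-r2}). The whole argument is a routine computation; the only thing to watch is the partial-fraction bookkeeping in the $(z-\al_n)/(z-1)$ term and the consequent cancellation of the polynomial part, and the fact that (\ref{r-r2}) emerges only after (\ref{r-r3}) is fed back into the $z^{-1}$ balance rather than from a single coefficient comparison.
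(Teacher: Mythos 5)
Your proposal is correct and follows essentially the same route as the paper: substitute the partial-fraction forms (\ref{an-new})--(\ref{bn-new}) and (\ref{vz'}) into $(S_1)$, equate the coefficients of $z^{-2}$, $z^{-1}$ and $(z-1)^{-1}$, and combine the three resulting identities (with the $z^{-1}$ balance plus (\ref{r-r3}) yielding (\ref{r-r2})). Your explicit check that the polynomial parts cancel is a nice touch but adds nothing beyond the paper's computation.
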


\begin{proof}
Substituting (\ref{an-new}) and (\ref{bn-new}) into $(S_1)$, we get
\begin{equation}
B_{n+1}(z) + B_n(z)  = \frac{r^*_{n+1} + r^*_n}{z^2} - \frac{-
r_{n+1} - r_n + 2n+1}{z} - \frac{r_{n+1} + r_n}{z-1}
\end{equation}
and
\begin{align}
(z-\alpha_n) A_n(z) - \textsf{v}'(z) & = (z-\alpha_n) \left[
\frac{R^*_n}{z^2} + \frac{R_n}{z} - \frac{R_n}{z-1} \right] - \left[
-  \frac{t}{z^2} - \frac{\alpha}{z} + \frac{\beta}{1-z} \right] \nonumber \\
& = \frac{t - \alpha_n R^*_n }{z^2} + \frac{\alpha + R^*_n -
\alpha_n R_n}{z} + \frac{\beta - (1-\alpha_n)R_n}{z-1}.
\end{align}
Comparing the coefficients in the above two formulas, it follows
that
\begin{align}
r^*_{n+1} + r^*_n & = t - \alpha_n R^*_n, \\
- r_{n+1} - r_n + 2n+1 & = - \alpha - R^*_n + \alpha_n R_n, \\
- r_{n+1} - r_n & = \beta - (1-\alpha_n)R_n.
\end{align}
Combining the above three formulas immediately proves our
proposition.
\end{proof}

\begin{prop}

From ($S_2'$), we obtain the following equations
\begin{equation} \label{r&R}
(r^*_n)^2 - t r^*_n = \beta_n R^*_n R^*_{n-1},
\end{equation}
\begin{equation} \label{r*&R*}
r_n^2 + \beta r_n = \beta_n  R_n R_{n-1},
\end{equation}
\begin{equation} \label{r&R&R*}
(t - 2 r^*_n) (n - r_n)- \alpha r^*_n = \beta_n(R^*_n R_{n-1} +
R^*_{n-1} R_n)
\end{equation}
and
\begin{equation} \label{Rsum}
\sum_{j=0}^{n-1} R^*_j = n (t - \alpha - n ) - ( 2n + \alpha + \beta
) (r^*_n - r_n ),
\end{equation}
where the constants $R_n$, $R_n^*$, $r_n$ and $r^*_n$ are defined in
(\ref{rn1-def})--(\ref{r*n2-def}), respectively.
\end{prop}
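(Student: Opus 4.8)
The plan is to insert the partial-fraction forms (\ref{an-new}) and (\ref{bn-new}) of $A_n(z),B_n(z)$, together with (\ref{vz'}), straight into the identity $(S_2')$ and to match the principal parts at the two poles $z=0$ and $z=1$. On the left, $B_n^2(z)$ has a fourth-order pole at the origin while $\textsf{v}'(z)B_n(z)$ and $\sum_{j=0}^{n-1}A_j(z)$ are milder there; on the right, $\beta_nA_n(z)A_{n-1}(z)$ also has a fourth-order pole at $0$ and a double pole at $1$. Since every term on both sides vanishes as $z\to\infty$, after expanding all mixed products into the basis $\{z^{-1},z^{-2},z^{-3},z^{-4},(z-1)^{-1},(z-1)^{-2}\}$ the identity $(S_2')$ becomes equivalent to the vanishing of six coefficients, and these six equations will contain the four stated relations.

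The three highest coefficients come out directly. Matching $z^{-4}$: the only contributions are $(r^*_n)^2$ from $B_n^2$, the product of $-t/z^2$ with $r^*_n/z^2$ in $\textsf{v}'B_n$, and $\beta_nR^*_nR^*_{n-1}$ from $\beta_nA_nA_{n-1}$, giving $(r^*_n)^2-t\,r^*_n=\beta_nR^*_nR^*_{n-1}$, which is (\ref{r&R}). Matching $(z-1)^{-2}$: one gets $r_n^2$ from $B_n^2$, $\beta r_n$ from $\textsf{v}'B_n$, and $\beta_nR_nR_{n-1}$ from $\beta_nA_nA_{n-1}$, i.e.\ (\ref{r*&R*}). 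Matching $z^{-3}$: $B_n^2$ contributes the cross term $-2r^*_n(n-r_n)$, $\textsf{v}'B_n$ contributes $t(n-r_n)-\alpha r^*_n$, and $\beta_nA_nA_{n-1}$ contributes $\beta_n(R^*_nR_{n-1}+R^*_{n-1}R_n)$; rearranging gives (\ref{r&R&R*}).

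The coefficient of $z^{-2}$ is the laborious one and yields (\ref{Rsum}). Here I would first decompose the mixed products that feed into it — the term $2(r^*_n/z^2)(-r_n/(z-1))$ and the $z^{-1}/(z-1)$ pieces inside $B_n^2$ and $\textsf{v}'B_n$, and the $(R_n/z)(-R_{n-1}/(z-1))$-type pieces inside $\beta_nA_nA_{n-1}$ — using $\frac{1}{z^2(z-1)}=-\frac1z-\frac1{z^2}+\frac1{z-1}$ and $\frac{1}{z(z-1)}=-\frac1z+\frac1{z-1}$, and then collect. Because $\sum_{j=0}^{n-1}A_j(z)$ puts $\big(\sum_{j=0}^{n-1}R^*_j\big)z^{-2}$ on the left, the resulting equation contains the sum explicitly; to reach the stated form one uses (\ref{r-r2}) in the telescoped form $\sum_{j=0}^{n-1}R_j=\sum_{j=0}^{n-1}R^*_j+n(n+\alpha+\beta)$ to eliminate $\sum_{j=0}^{n-1}R_j$, and then applies the already-established (\ref{r&R})--(\ref{r&R&R*}) to cancel the quadratic $R$-products against expressions in $r^*_n,r_n,n,t,\alpha$. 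What remains is exactly (\ref{Rsum}).

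The last thing to check is that the two remaining coefficient equations, those of $z^{-1}$ and $(z-1)^{-1}$, produce no new constraint: using Proposition~\ref{eqns-r&R} (the relations coming from $(S_1)$) together with (\ref{r&R})--(\ref{Rsum}) they should collapse to identities. I expect this redundancy check, and the sign-and-coefficient bookkeeping in the $z^{-2}$ term (where a single slip propagates into $\sum_{j=0}^{n-1}R^*_j$), to be the only real obstacles; the rest is a routine partial-fraction expansion.
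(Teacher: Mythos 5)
Your proposal is correct and follows essentially the same route as the paper: substitute the partial-fraction forms (\ref{an-new})--(\ref{bn-new}) into $(S_2')$, read off the coefficients at $z^{-4}$, $(z-1)^{-2}$, $z^{-3}$ to get (\ref{r&R})--(\ref{r&R&R*}), and then reduce the $z^{-2}$ coefficient using the already-established relations to get (\ref{Rsum}). One small correction: $\sum_{j=0}^{n-1}R_j$ never enters the $z^{-2}$ coefficient (those terms feed only the $z^{-1}$ and $(z-1)^{-1}$ parts), so the telescoped use of (\ref{r-r2}) is unnecessary -- matching at $z^{-2}$ gives an identity in $\sum_{j=0}^{n-1}R^*_j$ alone, which combined with (\ref{r&R&R*}) and (\ref{r*&R*}) yields (\ref{Rsum}) exactly as in the paper, and the leftover $z^{-1}$, $(z-1)^{-1}$ coefficients need not be checked since the proposition only asserts that these four equations follow from $(S_2')$.
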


\begin{proof}

From (\ref{an-new}) and (\ref{bn-new}), we know
\begin{equation} \label{s2'lhs}
\begin{split}
B_n^2(z) + \textsf{v}'(z) B_n(z) + \sum_{j=0}^{n-1}A_j(z) &  \\
& \hspace{-130pt} = \frac{(r^*_n)^2 - t r^*_n}{z^4} + \frac{(t - 2
r^*_n) (n - r_n)- \alpha r^*_n}{z^3}
+ \frac{(n - r_n)^2 + \alpha (n - r_n)}{z^2}  \\
& \hspace{-130pt} \quad + \frac{r_n^2 + \beta r_n}{(z-1)^2}+ \frac{-
2 r_n r^*_n - \beta r^*_n + t r_n}{z^2 (z-1)} + \frac{( \beta + 2
r_n) (n - r_n) + \alpha
r_n}{z (z-1)} \\
&  \hspace{-130pt} \quad + \sum_{j=0}^{n-1} \left[ \frac{R^*_j}{z^2}
+ \frac{R_j}{z} - \frac{R_j}{z-1} \right].
\end{split}
\end{equation}
Using (\ref{an-new}) again, we have
\begin{align}
\beta_{n} A_n(z) A_{n-1}(z)  & =  \frac{\beta_n R^*_n R^*_{n-1}
}{z^4} +  \beta_n R_n R_{n-1} \left[
\frac{1}{z^2} - \frac{2}{z(z-1)} + \frac{1}{(z-1)^2} \right] \nonumber \\
& \quad + \beta_n(R^*_n R_{n-1} + R^*_{n-1} R_n) \left[
\frac{1}{z^3} - \frac{1}{z^2(z-1)} \right] .  \label{s2'rhs}
\end{align}
Recalling $(S_2')$, (\ref{s2'lhs}) and (\ref{s2'rhs}) are equal.
Then let us compare their coefficients. At $O(z^{-4})$,
$O(z-1)^{-2}$ and $O(z^{-3})$, equating the coefficients we have
(\ref{r&R}), (\ref{r*&R*}) and (\ref{r&R&R*}) in our proposition,
respectively. At $O(z^{-2})$, using the fact
\begin{equation*}
\frac{1}{z-1} = - 1 - z - z^2 - \cdots, \qquad \textrm{as }  z \to
0,
\end{equation*}
we obtain
\begin{equation}
\begin{split}
(n - r_n)^2 + \alpha (n - r_n) - (- 2 r_n r^*_n - \beta r^*_n + t r_n) = & \ \beta_n R_n R_{n-1} \\
& \hspace{-180pt} + \beta_n(R^*_n R_{n-1} + R^*_{n-1} R_n) -
\sum_{j=0}^{n-1} R^*_j.
\end{split}
\end{equation}
Combining (\ref{r&R&R*}) and the above formula yields
\begin{equation} \label{Rsum0}
\begin{split}
\sum_{j=0}^{n-1} R^*_j = & \ \beta_n  R_n R_{n-1} + (t - 2 r^*_n - \alpha)(n - r_n) \\
&  - (n - r_n)^2 + (t - 2 r^*_n) r_n - (\alpha + \beta) r^*_n.
\end{split}
\end{equation}
Substituting (\ref{r*&R*}) into (\ref{Rsum0}) gives us (\ref{Rsum}).
\end{proof}

\begin{rem}
From ($S_2$), using similar calculations as in the above
proposition, we get one more equation as follows
\begin{equation} \label{r&r4}
- r_{n+1} + r_n + r^*_{n+1} - r^*_n + \alpha_n  = 0.
\end{equation}
To continue, we re-write (\ref{r&r4}) as,
\begin{equation}
-\al_n=r_{n+1}^*-r_n^*-(r_{n+1}-r_n).\nonumber
\end{equation}
Performing a telescopic sum and recalling (\ref{alphasum-p1n}), we
find the very handy relation
\begin{equation} \label{p1n-rs}
\textsf{p}_1(n)=r_n^*-r_n,
\end{equation}
where we have used the initial conditions $r_0(t)=r_0^*(t):=0.$ As
we shall see later (\ref{p1n-rs}) will play a crucial role in the
derivation of the Painlev\'e equation.
\end{rem}
\begin{rem}
We may expect that $(S_1)$ and $(S_2)$ should ``contain" all that is
necessary. However, these are non-linear equations and their
combination $(S_2')$ carries extra information. It transpires that
all three are needed to provide a completely description of the
recurrence coefficients.
\end{rem}

\section{The recurrence coefficients}

In this section we shall express the recurrence coefficients $\al_n$
and $\bt_n$ in terms of the auxiliary quantities $R_n,\:r_n$ and
$r_n^*.$ Note that we do not require $R_n^*$ since it is $R_n$ up to
a linear form in $n$; see (\ref{r-r2}).

\begin{lem}
The diagonal recurrence coefficients $\al_n$ is expressed in terms
of $R_n,\:r_n,\:$ and $r_n^*$ as follows:
\begin{equation} \label{alpha-rs}
(2n+2+\al+\bt)\al_n=2(r_n^*-r_n)+R_n-\bt-t.
\end{equation}
\end{lem}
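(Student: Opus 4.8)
The plan is to extract the relation from the three equations of Proposition~\ref{eqns-r&R} together with the $O(z^{-1})$ coefficient comparison that was begun but not completed in the proof of the $(S_2')$ proposition. Recall that equating the $z^{-2}$-coefficients in $(S_2')$ already produced \eqref{Rsum0}. First I would go back and equate the $z^{-1}$ (equivalently the $(z-1)^{-1}$ or $z(z-1)^{-1}$) coefficients in the identity ``\eqref{s2'lhs} $=$ \eqref{s2'rhs}'': the left side contributes $(\beta+2r_n)(n-r_n)+\alpha r_n$ from the $z(z-1)^{-1}$ term plus $\sum_{j=0}^{n-1}R_j$ from the explicit sum and more terms from re-expanding $1/(z-1)=-1-z-\cdots$ and $1/(z^2(z-1))$ near $z=0$, while the right side contributes $-2\beta_nR_nR_{n-1}$ from the $z(z-1)^{-1}$ piece and $-\beta_n(R_n^*R_{n-1}+R_{n-1}^*R_n)$ from the $z^2(z-1)^{-1}$ piece. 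This gives a second summed identity involving $\sum_{j=0}^{n-1}R_j$, analogous to \eqref{Rsum}.

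Next I would use the linear relation \eqref{r-r2}, $R_n^*-R_n=-2n-1-\alpha-\beta$, to convert $\sum_{j=0}^{n-1}R_j^*$ into $\sum_{j=0}^{n-1}R_j$ plus the elementary sum $\sum_{j=0}^{n-1}(-2j-1-\alpha-\beta)=-n(n+\alpha+\beta)$. Comparing the resulting expression for $\sum_{j=0}^{n-1}R_j$ with \eqref{Rsum} (after the same substitution) eliminates both sums and leaves a purely \emph{local} (in $n$) algebraic relation among $R_n$, $R_{n-1}$, $r_n$, $r_n^*$, $\beta_n$ and the quadratic quantities $\beta_nR_nR_{n-1}$ and $\beta_n(R_n^*R_{n-1}+R_{n-1}^*R_n)$. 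At this point I would invoke \eqref{r*&R*} and \eqref{r&R&R*} to replace $\beta_nR_nR_{n-1}=r_n^2+\beta r_n$ and $\beta_n(R_n^*R_{n-1}+R_{n-1}^*R_n)=(t-2r_n^*)(n-r_n)-\alpha r_n^*$, so that all reference to $\beta_n$ and to index $n-1$ disappears.

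Finally, the $\alpha_n$ itself has to enter. I would bring it in through the $(S_1)$ relations \eqref{r-r1} and \eqref{r-r3}: these read $r_{n+1}^*+r_n^*=t-\alpha_nR_n^*$ and $r_{n+1}+r_n=(1-\alpha_n)R_n-\beta$, and combined with the ``handy'' telescoped relation $-\alpha_n=(r_{n+1}^*-r_n^*)-(r_{n+1}-r_n)$ from \eqref{r&r4} they let me solve for $r_{n+1}^*$, $r_{n+1}$ and hence isolate $\alpha_n$ as a linear combination of $r_n^*$, $r_n$, $R_n$ and constants; adding the two $(S_1)$ equations and using $R_n^*=R_n-2n-1-\alpha-\beta$ gives $-\alpha_n(2n+1+\alpha+\beta)+2(\text{shift terms})$, and reconciling this with the local relation from the previous step produces exactly $(2n+2+\alpha+\beta)\alpha_n=2(r_n^*-r_n)+R_n-\beta-t$. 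The main obstacle I anticipate is bookkeeping: the $z\to 0$ expansions of the mixed partial-fraction pieces $1/(z^2(z-1))$ and $1/(z(z-1))$ feed several powers of $z$ simultaneously, so one must be careful to collect \emph{all} contributions at each order and not double-count the terms already used to derive \eqref{r&R&R*} and \eqref{Rsum0}; once that is done the algebra is linear and the sums cancel cleanly.
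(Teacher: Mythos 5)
Your proposal does establish the lemma, but the only part that actually does the work is your final paragraph, and that part coincides with the paper's own proof: use (\ref{r-r2}) to replace $R_n^*$ by $R_n-(2n+1+\al+\bt)$ in (\ref{r-r1}), solve (\ref{r-r1}) and (\ref{r-r3}) for $r_{n+1}^*$ and $r_{n+1}$ (equivalently, take their difference), and substitute into the $(S_2)$-relation (\ref{r&r4}); the shifted quantities cancel, leaving $\al_n+2(r_n-r_n^*)=R_n-\bt-t-(2n+1+\al+\bt)\al_n$, which is exactly (\ref{alpha-rs}). One small slip in your wording: it is the \emph{difference}, not the sum, of (\ref{r-r1}) and (\ref{r-r3}) that meshes with the telescoped relation from (\ref{r&r4}); the sum leaves $r_{n+1}+r_{n+1}^*$ unpaired, though the substitution you describe earlier in the same sentence amounts to the correct operation. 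Everything in your first two paragraphs is superfluous for this lemma: the quadratic identities (\ref{r*&R*}) and (\ref{r&R&R*}) and the sum rule (\ref{Rsum}) are the ingredients of the companion result (\ref{beta-r-r*}) for $\bt_n$, not of (\ref{alpha-rs}). Moreover, the extra simple-pole comparison you propose in $(S_2')$ yields at most one further identity, since both sides of $(S_2')$ are $O(z^{-2})$ at infinity and hence the residues at $z=0$ and $z=1$ are forced to be opposite; you should not rely on it being independent of the relations already recorded. Since your linear step closes by itself, no ``reconciliation'' with such a local relation is needed: the lemma is purely a consequence of the linear identities (\ref{r-r1})--(\ref{r-r3}) together with (\ref{r&r4}), which is precisely how the paper argues.
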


\begin{proof}

We eliminate $R_n^*$ from (\ref{r-r1}) with the aid of (\ref{r-r2})
and find
\begin{equation}
r_{n+1}^*+r_n^*=t+\al_n(2n+1+\al+\bt-R_n).
\end{equation}
Subtracting the above formula from (\ref{r-r3}), we get
\begin{equation}
r_{n+1}-r_{n+1}^*+r_n-r_n^*=R_n-\bt-t-(2n+1+\al+\bt)\al_n.
\end{equation}
Recalling (\ref{r&r4}), we see that the left hand side of the above
formula is $\al_n + 2 (r_n-r_n^*)$. Then (\ref{alpha-rs})
immediately follows.
\end{proof}

\begin{rem}
For $n=0,$ we find, from the definition of $\al_0(t)$ and $R_0(t),$
that
\begin{align}
\al_0(t)=&\frac{U(1+\bt,-\al-1,t)}{U(1+\bt,-\al,t)} ,\\
R_0(t)=&\frac{U(\bt,-\al,t)}{U(1+\bt,-\al,t)},
\end{align}
where $U$ is the second solution of Kummer's equation; see
\cite{slater}. We verify the validity of (\ref{alpha-rs}) at $n=0$
by substituting the above two formulas.
\end{rem}
\begin{rem} \label{R0-asy}
For $t\to\infty,$
\begin{equation}
R_0(t)=t\left(1+\frac{\al+2(1+\bt)}{t}+{\rm
O}\left(1/t^2\right)\right).
\end{equation}
\end{rem}
The next lemma gives an expression for $\bt_n.$
\begin{lem}
The off-diagonal recurrence coefficients $\bt_n$ is expressed in
terms of $\:r_n,$ and $r_n^*$ as follows:
\begin{equation} \label{beta-r-r*}
\{1-(2n+\al+\bt)^2\}\bt_n=-(r_n^*-r_n)^2-(\bt+t)r_n+(t-\al-2n)r_n^*+nt.
\end{equation}
\end{lem}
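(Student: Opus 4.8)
The plan is to read $\bt_n$ off the three quadratic relations (\ref{r&R}), (\ref{r*&R*}) and (\ref{r&R&R*}) of Proposition~\ref{eqns-r&R}, whose right-hand sides are $\bt_n$ times, respectively, $R^*_nR^*_{n-1}$, $R_nR_{n-1}$ and $R^*_nR_{n-1}+R^*_{n-1}R_n$. The crucial simplification is that, by (\ref{r-r2}), the starred and unstarred quantities differ only by a term linear in $n$: writing $c_n:=2n+1+\al+\bt$ we have $R^*_n=R_n-c_n$ and $R^*_{n-1}=R_{n-1}-c_{n-1}$. Hence, purely algebraically,
\[
R^*_nR^*_{n-1}=R_nR_{n-1}-(c_{n-1}R_n+c_nR_{n-1})+c_nc_{n-1},
\qquad
R^*_nR_{n-1}+R^*_{n-1}R_n=2R_nR_{n-1}-(c_nR_{n-1}+c_{n-1}R_n),
\]
and eliminating the ``mixed'' term $c_{n-1}R_n+c_nR_{n-1}$ between these two gives
\[
R^*_nR^*_{n-1}=-R_nR_{n-1}+\bigl(R^*_nR_{n-1}+R^*_{n-1}R_n\bigr)+c_nc_{n-1}.
\]

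Multiplying this identity by $\bt_n$ and substituting the left-hand sides of (\ref{r&R}), (\ref{r*&R*}) and (\ref{r&R&R*}) in place of $\bt_nR^*_nR^*_{n-1}$, $\bt_nR_nR_{n-1}$ and $\bt_n(R^*_nR_{n-1}+R^*_{n-1}R_n)$ converts it into a relation that no longer involves $R_n$ or $R^*_n$ except through the factor $\bt_nc_nc_{n-1}$:
\[
(r^*_n)^2-tr^*_n=-(r_n^2+\bt r_n)+\bigl[(t-2r^*_n)(n-r_n)-\al r^*_n\bigr]+\bt_n\,c_nc_{n-1}.
\]
Since $c_nc_{n-1}=(2n+1+\al+\bt)(2n-1+\al+\bt)=(2n+\al+\bt)^2-1=-\{1-(2n+\al+\bt)^2\}$, solving for $\bt_n$, expanding $(t-2r^*_n)(n-r_n)=nt-tr_n-2nr^*_n+2r^*_nr_n$, and collecting the purely quadratic terms as $-(r^*_n)^2+2r^*_nr_n-r_n^2=-(r^*_n-r_n)^2$, yields precisely (\ref{beta-r-r*}).

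There is no genuine analytic difficulty in this argument; the only points requiring care are bookkeeping ones — keeping track of which auxiliary quantities carry a star, and recognising that the coefficient $c_nc_{n-1}$ produced by the elimination is exactly $(2n+\al+\bt)^2-1$, so that it reproduces the prefactor $\{1-(2n+\al+\bt)^2\}$ on the left of (\ref{beta-r-r*}). One should also note that (\ref{r&R})--(\ref{r&R&R*}) come from $(S_2')$ with the conventions $B_0=0$ and $\bt_0A_{-1}=0$, so they — and hence (\ref{beta-r-r*}) — are to be read for $n\ge1$; at $n=0$ both sides degenerate and no information is lost.
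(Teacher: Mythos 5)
Your proof is correct and is essentially the paper's own argument: both rest on combining (\ref{r&R}), (\ref{r*&R*}) and (\ref{r&R&R*}) with signs $+,+,-$ and using (\ref{r-r2}) so that the mixed products of $R$'s drop out, leaving only $\bt_n\,[(2n+\al+\bt)^2-1]$. The only difference is organizational — the paper substitutes $R^*_n=R_n-(2n+1+\al+\bt)$ into (\ref{r&R}) and (\ref{r&R&R*}) separately, replaces $\bt_nR_nR_{n-1}$ via (\ref{r*&R*}), and subtracts, whereas you form the product identity first and then substitute; the content is the same.
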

\begin{proof}
We eliminate $R_n^*$ in favor of $R_n$ using (\ref{r&R}) and replace
$\bt_nR_nR_{n-1}$ by $r_n^2+\bt r_n$ with (\ref{r*&R*}) to find
\begin{equation*}
(r_n^*)^2-tr_n^*=r_n^2+\bt r_n+\bt_n[(2n+\al+\bt)^2-1]
-\bt_n[(2n+1+\al+\bt)R_{n-1}+(2n -1 +\al+\bt)R_{n}].
\end{equation*}
The same substitutions made on (\ref{r&R&R*}) produces
\begin{equation} \label{r-r*-R}
(t-2r_n^*)(n-r_n)-\al r_n^* =2(r_n^2+\bt
r_n)-\bt_n[(2n+1+\al+\bt)R_{n-1}+R_n(2n-1+\al+\bt)].
\end{equation}
Subtracting the above two formulas gives us (\ref{beta-r-r*}).
\end{proof}
\begin{rem}
We consider the case when $t=0.$ In this situation
$R_n^*(0)=r_n^*(0)=0.$ Therefore, from (\ref{r-r2}) and (\ref{Rsum})
we find
$$
R_n(0)=2n + 1 +\al+\bt
$$
and
$$
r_n(0)= \D\frac{n(n+\al)}{2n+\al+\bt},$$ respectively. Finally, from
(\ref{alpha-rs}) and (\ref{beta-r-r*}), we have
\begin{align}
\al_n(0)=&\frac{2n^2+2n(\al+\bt+1)+(1+\al)(\al+\bt)}{(2n+\al+\bt)(2n +\al+\bt+2)},\\
\bt_n(0)=&\frac{n(n+\al)[n^2+(\al+2\bt)n+\bt(\al+\bt)]}{(2n+\al+\bt)^2[(2n+\al+\bt)^2-1]}.
\end{align}
Note that
\begin{align}
\lim_{n\to\infty}\al_n(0)=&\frac{1}{2},\nonumber\\
\lim_{n\to\infty}\bt_n(0)=&\frac{1}{16}.\nonumber
\end{align}
They are in agreement with the classical theory in \cite{Ne}.
\end{rem}


\section{The $t$ dependance}

Note that our weight function depends on $t$. As a consequence, the
coefficients of our polynomials, the recurrence coefficients and the
auxiliary quantities defined in (\ref{rn1-def})--(\ref{r*n2-def})
all depend on $t$. In this section, we are going to study the
evolution of auxiliary quantities in $t.$ First of all, we state a
lemma which concerns the derivative of $\textsf{p}_1(n)$ with
respect to $t.$

\begin{lem}

We have
\begin{equation} \label{p1n-diff}
t\frac{d}{dt} \textsf{p}_1(n) = r^*_n.
\end{equation}

\end{lem}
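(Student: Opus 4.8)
The plan is to differentiate the defining relation $\textsf{p}_1(n) = r_n^* - r_n$ from (\ref{p1n-rs}) — but that would require knowing $t\frac{d}{dt}r_n^*$ and $t\frac{d}{dt}r_n$ separately, which we do not yet have. So instead I would work directly from the orthogonality relation. Recall from (\ref{alphasum-p1n}) that $\textsf{p}_1(n) = -\sum_{j=0}^{n-1}\al_j$, and that $\textsf{p}_1(n)$ can also be extracted as a ratio of determinants; more convenient here is to use the integral representation coming from orthogonality. Since $P_n(x) = x^n + \textsf{p}_1(n)x^{n-1}+\cdots$, the coefficient $\textsf{p}_1(n)$ is characterized by $\int_0^1 P_n(x)\,x^{n-1}\,w(x;t)\,dx = 0$ together with $\int_0^1 P_n(x)\,x^{n-1}\,w(x;t)\,dx$ being expressible via $h_{n-1}$; more cleanly, one has the standard formula $\textsf{p}_1(n) = -\frac{1}{h_{n-1}}\int_0^1 x^{n}P_{n-1}(x)w(x;t)\,dx$ obtained by writing $x^n = P_n(x) - \textsf{p}_1(n)P_{n-1}(x)-\cdots$ against $P_{n-1}$ under the inner product, or equivalently by a known identity relating $\textsf{p}_1(n)$ to the logarithmic derivative of $D_n$.

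The cleanest route, which I would take, is via the Hankel determinant. Differentiating $D_n(t) = \prod_{j=0}^{n-1}h_j$ and using $\frac{\partial}{\partial t}w(x;t) = -\frac{1}{x}w(x;t)$, one gets $\frac{d}{dt}\ln h_n = -\frac{1}{h_n}\int_0^1 [P_n(x)]^2\frac{w(x;t)}{x}\,dx + (\text{boundary/normalization terms})$; the term $\frac{1}{h_n}\int_0^1[P_n]^2 \frac{w}{x}dx$ is exactly $R_n^*/t$ by the definition (\ref{rn1-def}). So $t\frac{d}{dt}\ln h_n = -R_n^*$, hence $t\frac{d}{dt}\ln D_n(t) = -\sum_{j=0}^{n-1}R_j^*$, i.e. $H_n(t) = -\sum_{j=0}^{n-1}R_j^*$ by (\ref{hn-def}). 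Separately, differentiating the orthogonality relation $\int_0^1 P_n P_{n-1} w\,dx = 0$ in $t$ and using $\frac{\partial w}{\partial t} = -w/x$ together with $\frac{\partial}{\partial t}P_n = (\frac{d}{dt}\textsf{p}_1(n))x^{n-1}+\cdots$ (a polynomial of degree $\le n-1$, expandable in $P_0,\dots,P_{n-1}$) yields
\begin{equation}
h_{n-1}\,\frac{d}{dt}\textsf{p}_1(n) = \int_0^1 P_n(x)P_{n-1}(x)\,\frac{w(x;t)}{x}\,dx = \frac{r_n^*}{t}\,h_{n-1},
\end{equation}
using (\ref{rn2-def}) for the last equality. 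Cancelling $h_{n-1}$ and multiplying by $t$ gives (\ref{p1n-diff}).

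The step I expect to be the main obstacle is justifying that $\frac{\partial}{\partial t}P_n(x)$, as a function of $x$, lies in the span of $P_0,\dots,P_{n-1}$ — i.e. that the leading coefficient contribution drops out — and that all the integrals converge and differentiation under the integral sign is legitimate. Convergence is fine since $e^{-t/x}$ kills the singularity at $x=0$ for $t>0$ and everything is integrable near $x=1$ when $\beta>0$; dominated convergence on compact $t$-intervals handles the differentiation. The polynomial-degree point is immediate: $P_n$ is monic of degree $n$ for every $t$, so $\frac{\partial}{\partial t}P_n$ has degree $\le n-1$, and then $\int_0^1 (\frac{\partial}{\partial t}P_n)P_{n-1}\frac{w}{x}dx$ is not needed — rather, I expand $\frac{\partial}{\partial t}(P_nP_{n-1}w)$, note the cross terms $\int(\partial_t P_n)P_{n-1}w$ and $\int P_n(\partial_t P_{n-1})w$ vanish by orthogonality since $\partial_t P_n \perp P_{n-1}$ has degree $\le n-1 < n$ and $\partial_t P_{n-1}$ has degree $\le n-2 < n$, leaving only $\int P_nP_{n-1}\partial_t w\,dx = -\int P_nP_{n-1}\frac{w}{x}dx$, which must therefore equal $-h_{n-1}\frac{d}{dt}\textsf{p}_1(n)$ after comparing with the direct derivative of $\int P_nP_{n-1}w\,dx=0$ reorganized through the recurrence; tracking the single surviving coefficient carefully is the only delicate bookkeeping.
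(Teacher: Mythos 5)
Your core derivation (the displayed equation in your second paragraph) is correct and is exactly the paper's proof: differentiate $\int_0^1 P_nP_{n-1}w\,dx=0$ with respect to $t$, use $\partial_t w=-w/x$, observe that $\partial_t P_n$ is a polynomial of degree $\le n-1$ whose coefficient of $x^{n-1}$ is $\frac{d}{dt}\textsf{p}_1(n)$, so that $\int_0^1(\partial_t P_n)P_{n-1}w\,dx=h_{n-1}\frac{d}{dt}\textsf{p}_1(n)$, and identify $\int_0^1 P_nP_{n-1}\frac{w}{x}\,dx$ with $r_n^*h_{n-1}/t$ via (\ref{rn2-def}). One correction to your closing paragraph, though: it is \emph{not} true that $\int_0^1(\partial_t P_n)P_{n-1}w\,dx$ vanishes by orthogonality. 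Since $\partial_t P_n$ has degree $\le n-1$ it need not be orthogonal to $P_{n-1}$; its component along $P_{n-1}$ is precisely $\frac{d}{dt}\textsf{p}_1(n)$, and that is exactly where the left-hand side $h_{n-1}\frac{d}{dt}\textsf{p}_1(n)$ comes from. Only the other cross term $\int_0^1 P_n(\partial_t P_{n-1})w\,dx$ vanishes, because $\partial_t P_{n-1}$ has degree $\le n-2<n$. If both cross terms vanished you would be forced to conclude $\int_0^1 P_nP_{n-1}\frac{w}{x}\,dx=0$, i.e.\ $r_n^*\equiv 0$, contradicting the identity you are proving; so retain the computation of your second paragraph and drop the ``cross terms vanish'' remark. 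Finally, the detour through $t\frac{d}{dt}\ln h_n=-R_n^*$ and $H_n=-\sum_{j=0}^{n-1}R_j^*$ is correct but irrelevant here: it proves the companion lemma (\ref{dn-R*}) about the Hankel determinant, not (\ref{p1n-diff}).
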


\begin{proof}

By the orthogonal property (\ref{orthogonality}), we know
\begin{equation*}
\int_0^1 P_n(x) P_{n-1}(x) w(x;t) dx = 0.
\end{equation*}
Differentiating the above formula with respect to $t$ gives us
\begin{equation*}
\int_0^1 \frac{d}{dt} P_n(x) \; P_{n-1}(x) w(x;t) dx + \int_0^1
P_n(x) P_{n-1}(x) \frac{d}{dt}w(x;t) dx = 0.
\end{equation*}
Using (\ref{w-def}), (\ref{orthogonality}) and (\ref{pn-formula}),
we get
\begin{equation*}
h_{n-1} \frac{d}{dt} \textsf{p}_1(n) - \int_0^1 P_n(x) P_{n-1}(x)
w(x)  \frac{dx}{x} = 0.
\end{equation*}
Taking into account of (\ref{rn2-def}), (\ref{p1n-diff}) follows
immediately.
\end{proof}

From (\ref{p1n-rs}) and the above lemma, it is easily seen that
\begin{equation} \label{p1n-diff-2}
t\frac{d}{dt}\textsf{p}_1(n)= r_n^*
=t\frac{d}{dt}r_n^*-t\frac{d}{dt}r_n
\end{equation}
or
\begin{equation} \label{r&r-diff}
t\frac{d}{dt}r_n^*= r_n^*+t\frac{d}{dt}r_n.
\end{equation}

Next, we have the following property about the Hankel determinant
$D_n$.

\begin{lem}

We have
\begin{equation} \label{dn-R*}
t \frac{d}{dt} \ln D_n(t) = - \sum_{j=0}^{n-1} R^*_j,
\end{equation}
where $R^*_j$ is defined in (\ref{rn1-def}).

\end{lem}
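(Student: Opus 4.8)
The plan is to differentiate the product formula $D_n(t) = \prod_{j=0}^{n-1} h_j$ with respect to $t$, which reduces the claim to computing $t\,\frac{d}{dt}\ln h_j$ for each $j$. Since $\ln D_n = \sum_{j=0}^{n-1} \ln h_j$, it suffices to show $t\,\frac{d}{dt}\ln h_j = -R^*_j$, and then sum over $j$.

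To compute $t\,\frac{d}{dt}\ln h_j$, I would start from $h_j = \int_0^1 [P_j(x)]^2 w(x;t)\,dx$ and differentiate under the integral sign. This gives two contributions: one from $\frac{d}{dt}P_j$ and one from $\frac{d}{dt}w(x;t)$. For the first, I use $\frac{d}{dt}P_j(x) = \frac{d}{dt}\textsf{p}_1(j)\,x^{j-1} + (\text{lower order})$ — more precisely, $\frac{d}{dt}P_j$ is a polynomial of degree at most $j-1$ — and orthogonality kills the cross term $\int_0^1 \left(\frac{d}{dt}P_j\right) P_j\, w\, dx = 0$ by degree considerations. For the second, from (\ref{w-def}) we have $\frac{d}{dt}w(x;t) = -\frac{1}{x}w(x;t)$, so the remaining piece is $-\int_0^1 [P_j(x)]^2 w(x)\,\frac{dx}{x}$. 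Comparing with the definition (\ref{rn1-def}), this equals $-\frac{1}{t}\cdot t\int_0^1 [P_j(x)]^2 w(x)\frac{dx}{x} = -\frac{h_j}{t}R^*_j$. Hence $\frac{d}{dt}h_j = -\frac{h_j}{t}R^*_j$, i.e. $t\,\frac{d}{dt}\ln h_j = -R^*_j$.

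Putting this together, $t\,\frac{d}{dt}\ln D_n(t) = \sum_{j=0}^{n-1} t\,\frac{d}{dt}\ln h_j = -\sum_{j=0}^{n-1} R^*_j$, which is exactly (\ref{dn-R*}). This mirrors the proof of Lemma (\ref{p1n-diff}) almost verbatim, just with $[P_n]^2$ in place of $P_nP_{n-1}$ and using $h_n$ rather than $h_{n-1}$; the only real content is the degree argument that makes the derivative-of-$P_j$ term vanish.

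I do not anticipate a genuine obstacle here — the computation is routine once one recalls that differentiating $P_j$ in $t$ lowers the degree. The one point to be slightly careful about is the interchange of differentiation and integration, which is justified because $e^{-t/x}$ and its $t$-derivative $-x^{-1}e^{-t/x}$ decay rapidly as $x\to 0^+$ for $t>0$ (and the $t=0$ case is the classical shifted Jacobi situation), so all integrands are dominated uniformly for $t$ in compact subintervals of $(0,\infty)$.
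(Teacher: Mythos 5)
Your proof is correct and follows essentially the same route as the paper: differentiate $h_j=\int_0^1[P_j]^2w\,dx$ in $t$, use orthogonality (degree of $\frac{d}{dt}P_j\le j-1$) to kill the cross term, identify the remaining integral with $-h_jR_j^*/t$ via (\ref{rn1-def}), and sum over $j$ using $D_n=\prod_{j=0}^{n-1}h_j$. No changes needed.
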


\begin{proof}

Note that the constant $h_n$ defined in (\ref{orthogonality})
depends on the parameter $t$. Then, from (\ref{w-def}) and
(\ref{orthogonality}), we have
\begin{equation}
h_n' = - \int_0^1 [P_n(x)]^2 w(x) \frac{dx}{x}.
\end{equation}
Recalling (\ref{rn1-def}), we get from the above formula
\begin{equation} \label{h'}
h_n' = - \frac{R^*_n h_n}{t},
\end{equation}
which gives us
\begin{equation}
t \frac{d}{dt} \ln h_n = - R^*_n.
\end{equation}
Then, our lemma immediately follows from the above formula and
(\ref{dn-def}).
\end{proof}

From the above lemmas, we also derive differential relations for the
recurrence coefficients $\alpha_n$ and $\beta_n$. These are the
non-standard Toda equations.
\begin{lem}
The recurrence coefficients $\alpha_n$ and $\beta_n$ satisfy the
following differential equations
$$
t\frac{d}{dt} \,\alpha_n  =  r^*_n - r^*_{n+1}, \eqno(T_1) \\
$$
$$
t\frac{d}{dt} \, \beta_n  = (R^*_{n-1} - R^*_n) \, \beta_n,
\eqno(T_2).
$$
where $R^*_n$ and $r^*_n$ are defined in (\ref{rn1-def}) and
(\ref{rn2-def}), respectively.
\end{lem}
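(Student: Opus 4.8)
The plan is to obtain both Toda-like equations by differentiating, with respect to $t$, identities that are already in hand: for $(T_1)$ the relation $\al_n=\textsf{p}_1(n)-\textsf{p}_1(n+1)$ of (\ref{alpha-p1n}) together with the lemma $t\,\frac{d}{dt}\textsf{p}_1(n)=r^*_n$ of (\ref{p1n-diff}); and for $(T_2)$ the standard consequence of the three-term recurrence that $\bt_n=h_n/h_{n-1}$, together with the normalization identity $t\,\frac{d}{dt}\ln h_n=-R^*_n$ that came out of the proof of (\ref{dn-R*}) (namely (\ref{h'})).

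For $(T_1)$ I would simply apply $t\,\frac{d}{dt}$ to (\ref{alpha-p1n}) and invoke (\ref{p1n-diff}) at the indices $n$ and $n+1$:
\begin{equation*}
t\frac{d}{dt}\al_n = t\frac{d}{dt}\textsf{p}_1(n) - t\frac{d}{dt}\textsf{p}_1(n+1) = r^*_n - r^*_{n+1},
\end{equation*}
which is exactly $(T_1)$.

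For $(T_2)$ I would first record $\bt_n=h_n/h_{n-1}$: multiplying the recurrence (\ref{recurrence}) by $P_{n-1}(x)w(x)$ and integrating over $[0,1]$ gives $\int_0^1 x P_n P_{n-1} w\,dx=\bt_n h_{n-1}$, while pairing $P_n$ with $xP_{n-1}$ reexpanded by (\ref{recurrence}) gives the same integral equal to $h_n$, hence $\bt_n h_{n-1}=h_n$. Then I would take $t\,\frac{d}{dt}\ln(\cdot)$ of $\bt_n=h_n/h_{n-1}$ and use $t\,\frac{d}{dt}\ln h_j=-R^*_j$ from (\ref{h'}):
\begin{equation*}
t\frac{d}{dt}\ln\bt_n = t\frac{d}{dt}\ln h_n - t\frac{d}{dt}\ln h_{n-1} = -R^*_n + R^*_{n-1},
\end{equation*}
which rearranges to $t\,\frac{d}{dt}\bt_n=(R^*_{n-1}-R^*_n)\bt_n$, i.e. $(T_2)$.

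Since the substantive work — deriving (\ref{p1n-diff}) and (\ref{h'}) — is already done in the preceding lemmas, there is no real obstacle here; the only mild point is the legitimacy of differentiating the orthogonality and normalization relations under the integral sign and the smoothness of $P_n$ and $h_n$ in $t$. For $t>0$ the weight $\rex^{-t/x}x^{\al}(1-x)^{\bt}$ and all the moments $\mu_k(t)$ are real-analytic in $t$ with the relevant $x$-integrals over $[0,1]$ controlled uniformly on compact $t$-sets, so this is routine, and the value at $t=0$ follows by continuity.
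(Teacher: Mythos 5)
Your proof is correct and follows essentially the same route as the paper: $(T_1)$ by applying $t\frac{d}{dt}$ to (\ref{alpha-p1n}) and invoking (\ref{p1n-diff}), and $(T_2)$ from $\bt_n=h_n/h_{n-1}$ together with (\ref{h'}). The extra details you supply (the derivation of $\bt_n=h_n/h_{n-1}$ and the justification of differentiating under the integral sign) are fine but not different in substance from the paper's argument.
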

\begin{proof}
$(T_1)$ follows from (\ref{alpha-p1n}) and (\ref{p1n-diff}). And
$(T_2)$ follows from (\ref{h'}) and the fact that $\beta_n = h_n /
h_{n-1}$.
\end{proof}

\section{Non-linear differential equation satisfied by $H_n$}

In this section we express $R_n,$ $r_n^*$ and $r_n$ in terms of
$H_n$ and its derivative with respect to $t,$ and obtain a
functional equation involving $H_n,$ $H_n'$ and $H_n''.$ For this
purpose, we first express $r_n$ and $r_n^*$ in terms of $H_n$ and
$H_n'$ in the next Lemma.
\begin{lem}
\begin{align}
r_n^*=&\frac{nt+tH_n'}{2n + \al+\bt}, \label{r*-hn} \\
r_n=&\frac{n(n+\al)+tH_n'-H_n}{2n + \al+\bt}. \label{r-hn}
\end{align}
\begin{proof}
From (\ref{Rsum}) and (\ref{dn-R*}) we have
\begin{equation} \label{hn-r-r*}
-H_n=nt-n(n+\al)-(2n + \al+\bt)(r_n^*-r_n)=nt-n(n+\al)-(2n +
\al+\bt)\textsf{p}_1(n).
\end{equation}
Taking a derivative of the above formula with respect to $t$ and
using (\ref{p1n-diff}), we find
\begin{equation} \label{hn-r*}
-H_n'=n-(2n + \al+\bt)\frac{r_n^*}{t}.
\end{equation}
The equation (\ref{r*-hn}) then follows from the above one. And the
equation (\ref{r-hn}) follows from eliminating $r_n^*$ from
(\ref{hn-r-r*}) and (\ref{hn-r*}).
\end{proof}
\end{lem}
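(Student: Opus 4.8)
The plan is to derive the two identities purely algebraically from the relations already established in Sections~2 and~4, treating $H_n$ and $H_n'$ as known quantities and solving a small linear system for $r_n^*$ and $r_n$. The starting point is the definition $H_n = t\,\frac{d}{dt}\ln D_n(t)$ together with Lemma giving $t\frac{d}{dt}\ln D_n(t) = -\sum_{j=0}^{n-1}R_j^*$, so that $-H_n = \sum_{j=0}^{n-1}R_j^*$. Feeding this into the finite-sum formula~(\ref{Rsum}) immediately yields
\begin{equation*}
-H_n = n(t-\al-n) - (2n+\al+\bt)(r_n^*-r_n),
\end{equation*}
which, after recalling the handy identity $\textsf{p}_1(n)=r_n^*-r_n$ from~(\ref{p1n-rs}), is the displayed equation~(\ref{hn-r-r*}). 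This is the one equation relating $H_n$ directly to the combination $r_n^*-r_n$.

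Next I would differentiate~(\ref{hn-r-r*}) with respect to $t$. The left side is simply $-H_n'$. On the right side, $n(t-\al-n)$ contributes $n$, and the term $(2n+\al+\bt)(r_n^*-r_n) = (2n+\al+\bt)\,\textsf{p}_1(n)$ contributes $(2n+\al+\bt)\,\frac{d}{dt}\textsf{p}_1(n)$. Here the key input is Lemma~(\ref{p1n-diff}), $t\frac{d}{dt}\textsf{p}_1(n)=r_n^*$, so $\frac{d}{dt}\textsf{p}_1(n) = r_n^*/t$. This gives
\begin{equation*}
-H_n' = n - (2n+\al+\bt)\,\frac{r_n^*}{t},
\end{equation*}
i.e. equation~(\ref{hn-r*}). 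Solving this linear equation for $r_n^*$ produces $r_n^* = t\,(n+H_n')/(2n+\al+\bt) = (nt+tH_n')/(2n+\al+\bt)$, which is~(\ref{r*-hn}).

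Finally, to get $r_n$ I would combine the two relations just obtained: from~(\ref{hn-r-r*}), $r_n = r_n^* + \big(H_n - n(t-\al-n)\big)/(2n+\al+\bt) = r_n^* + \big(H_n - nt + n(n+\al)\big)/(2n+\al+\bt)$, and then substitute the expression for $r_n^*$ just found. The $nt$ terms cancel, leaving $r_n = \big(n(n+\al) + tH_n' - H_n\big)/(2n+\al+\bt)$, which is~(\ref{r-hn}). The whole argument is essentially bookkeeping: assemble~(\ref{Rsum}), $H_n = -\sum R_j^*$, and $\textsf{p}_1(n)=r_n^*-r_n$; differentiate once using $t\,\textsf{p}_1'(n)=r_n^*$; then solve the resulting $2\times 2$ linear system. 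There is no real obstacle, though one must be careful that the coefficient $2n+\al+\bt$ never vanishes for the admissible parameter range $\al,\bt>0$ and $n\geq 0$, so the divisions are legitimate; one should also check the edge case $n=0$ separately, where $\textsf{p}_1(0)=0$ forces the formulas to reduce consistently.
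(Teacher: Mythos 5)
Your proposal follows exactly the paper's own proof: combine (\ref{Rsum}) with (\ref{dn-R*}) to get (\ref{hn-r-r*}), differentiate in $t$ using $t\frac{d}{dt}\textsf{p}_1(n)=r_n^*$ from (\ref{p1n-diff}) to get (\ref{hn-r*}), solve for $r_n^*$, then eliminate $r_n^*$ to get $r_n$. The only blemish is a sign slip in your intermediate step for $r_n$ --- solving (\ref{hn-r-r*}) gives $r_n = r_n^* - \bigl(H_n + n(t-\al-n)\bigr)/(2n+\al+\bt)$ rather than $r_n^* + \bigl(H_n - n(t-\al-n)\bigr)/(2n+\al+\bt)$ --- but the final formula you state is the correct result of that substitution, so nothing essential is affected.
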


Then we try to get a similar Lemma for $R_n$. To achieve it, we
first derive the relations among $R_n,$ $r_n,$ and $r_n^*.$

\begin{prop}

The auxiliary quantity $R_n(t)$ satisfies the following quadratic
equations
\begin{align}
\frac{2n+1+\al+\bt}{R_n}\:(r_n^2+\bt r_n)+&
\frac{R_n}{2n+1+\al+\bt}\:\left[(r_n^*-r_n)^2+(2n + \al
-t)r_n^*+(\bt+t)r_n-nt\right]
\nonumber\\
=& 2 r_n^2 + (t+ 2 \beta - 2 r_n^*) r_n + (2n + \alpha) r_n^* - nt
\label{R-r-r*}
\end{align}
and
\begin{align}
\frac{1-(2n+\al+\bt)^2 }{R_n}\: (r_n^2+\bt r_n)
+ & \left[(r_n^*-r_n)^2-t(r_n^*-r_n)+ \bt\:r_n+(2n + \al) r_n^*-nt \right]R_n   \nonumber\\
& \hspace{-100pt} = 2 r_n^2 + (t+ 2 \beta - 2 r_n^*) r_n + (2n +
\alpha) r_n^* - nt - (2n + \al+\bt )t\frac{d}{dt}r_n.
\label{R-r-r*2}
\end{align}

\end{prop}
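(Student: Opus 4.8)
The plan is to establish the two identities one after the other, in each case substituting into a relation already on the table the closed form for $\bt_n$ in (\ref{beta-r-r*}) together with the factorization $\bt_n R_n R_{n-1}=r_n^2+\bt r_n$ of (\ref{r*&R*}). The latter is the source of the quadratic structure: since $R_n>0$ for $t>0$ (the integrand defining $R_n$ is positive on $(0,1)$), one may write $\bt_n R_{n-1}=(r_n^2+\bt r_n)/R_n$, so that any relation linear in $\bt_n R_{n-1}$ and $\bt_n R_n$ becomes, once the expression for $\bt_n$ is inserted, one involving both $R_n$ and $1/R_n$.

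For (\ref{R-r-r*}) I would start from (\ref{r-r*-R}), the equation obtained in the proof of the lemma giving (\ref{beta-r-r*}) by feeding (\ref{r&R}) and (\ref{r*&R*}) into (\ref{r&R&R*}):
\begin{equation*}
(t-2r_n^*)(n-r_n)-\al r_n^* =2(r_n^2+\bt r_n)-\bt_n\bigl[(2n+1+\al+\bt)R_{n-1}+(2n-1+\al+\bt)R_{n}\bigr].
\end{equation*}
Isolating the $\bt_n$ term on one side, splitting it, replacing $\bt_nR_{n-1}$ by $(r_n^2+\bt r_n)/R_n$ via (\ref{r*&R*}) and $\bt_nR_n$ by $R_n$ times the right side of (\ref{beta-r-r*}) divided by $1-(2n+\al+\bt)^2$, and then collecting the monomials in $r_n$ and $r_n^*$ on the right, one arrives at (\ref{R-r-r*}). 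This step is purely algebraic; the only point to watch is the identity $(2n-1+\al+\bt)/(1-(2n+\al+\bt)^2)=-1/(2n+1+\al+\bt)$, which produces the coefficients displayed in (\ref{R-r-r*}).

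For (\ref{R-r-r*2}) the new ingredient is the $t$-derivative, so I would bring in the Toda equation $(T_2)$, $t\frac{d}{dt}\bt_n=(R^*_{n-1}-R^*_n)\bt_n$. Applying (\ref{r-r2}) with indices $n$ and $n-1$ gives $R^*_{n-1}-R^*_n=R_{n-1}-R_n+2$, and then (\ref{r*&R*}) turns the right side of $(T_2)$ into $(r_n^2+\bt r_n)/R_n-(R_n-2)\bt_n$, into which I substitute $\bt_n$ from (\ref{beta-r-r*}). On the left side I would apply $t\frac{d}{dt}$ to the expression (\ref{beta-r-r*}) for $\bt_n$ itself, using (\ref{r&r-diff}), equivalently $t\frac{d}{dt}(r_n^*-r_n)=r_n^*$, to replace each $t\frac{d}{dt}r_n^*$ by $r_n^*+t\frac{d}{dt}r_n$. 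Equating the two expressions for $t\frac{d}{dt}\bt_n$, multiplying through by $1-(2n+\al+\bt)^2$ and collecting terms yields (\ref{R-r-r*2}), the surviving derivative coming out with coefficient $-(2n+\al+\bt)$ as stated.

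The bulk of the work in both parts is bookkeeping of monomials in $r_n$ and $r_n^*$; nothing conceptual is hidden once (\ref{r*&R*}) and (\ref{beta-r-r*}) are in hand. I expect the one genuinely error-prone step to be the differentiation in the proof of (\ref{R-r-r*2}): besides the $t$-dependence carried by $r_n$ and $r_n^*$, the coefficients $\bt+t$, $t-\al-2n$ and $nt$ in (\ref{beta-r-r*}) depend on $t$ explicitly, so that a careful use of the product rule, together with an appropriately timed appeal to (\ref{r&r-diff}), is what makes the final identity close up in $r_n$, $r_n^*$ and $t\frac{d}{dt}r_n$ alone.
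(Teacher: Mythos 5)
Your plan reproduces the paper's own proof: equation (\ref{R-r-r*}) is obtained exactly as in the paper by inserting $\bt_nR_{n-1}=(r_n^2+\bt r_n)/R_n$ from (\ref{r*&R*}) into (\ref{r-r*-R}) and then eliminating $\bt_n$ via (\ref{beta-r-r*}) (your coefficient identity $(2n-1+\al+\bt)/[1-(2n+\al+\bt)^2]=-1/(2n+1+\al+\bt)$ is correct), while (\ref{R-r-r*2}) is derived, as in the paper, by rewriting $(T_2)$ with (\ref{r-r2}) and (\ref{r*&R*}) and equating it with the $t\frac{d}{dt}$ of (\ref{beta-r-r*}) computed using (\ref{r&r-diff}). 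The approach and the bookkeeping you describe are sound and essentially identical to the published argument.
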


\begin{proof}

In (\ref{r-r*-R}), we replace $\bt_nR_{n-1}$ by
$(r_n^2+\bt\:r_n)/R_n$ with (\ref{r*&R*}) and get
\begin{equation*}
(2n+1+\al+\bt)\frac{r_n^2+\bt\:r_n}{R_n}+(2n-1+\al+\bt)\bt_nR_n
=2(r_n^2+\bt\:r_n)+\al\:r_n^*+(t-2r_n^*)(r_n-n).
\end{equation*}
Replacing $\bt_n$ in the above formula with the aid of
(\ref{beta-r-r*}), we have (\ref{R-r-r*}).

Then, we look back to $(T_2)$ by using (\ref{r-r2}) and
(\ref{r*&R*})
\begin{align}
t\frac{d}{dt}\bt_n=&(R_{n-1}^*-R_n^*)\bt_n\nonumber\\
=&(R_{n-1}-R_n+2)\bt_n\nonumber\\
=&(2-R_n)\bt_n+\frac{r_n^2+\bt\:r_n}{R_n}. \label{beta-diff}
\end{align}
Applying $t\frac{d}{dt}$ to (\ref{beta-r-r*}) gives us
\begin{align*}
[1-(2n+\al+\bt)^2]t\frac{d}{dt}\bt_n =-2(r_n^*-r_n) r_n^*
+t(r_n^*-r_n)+tr_n^*-\bt\:t\frac{d}{dt}r_n-(2n +
\al)t\frac{d}{dt}r_n^* +nt,
\end{align*}
where we have made used of (\ref{p1n-diff-2}) to arrive at the last
step. Substituting (\ref{beta-diff}) into the above formula gives us
(\ref{R-r-r*2}).
\end{proof}

Directly from the above proposition, we express $R_n$ and $1/R_n$ in
terms of $r_n,$ $r_n^*$ and $tr_n'(t).$
\begin{prop}
The auxiliary quantity $R_n$ has the following representations
\begin{align}
R_n(t)=&\frac{(2n+1+\al+\bt)[2r_n^2+(t+2\bt-2r_n^*)r_n+(2n+\al)r_n^*-nt-tr_n'(t)]}
{2 [(r_n^*-r_n)^2+( 2n +  \al - t)r_n^*+(\bt+t)r_n-nt]}, \label{R-r-r*3} \\
\frac{1}{R_n(t)}=&\frac{2r_n^2+ (t+2\bt-2r_n^*)r_n
+(2n+\al)r_n^*-nt+tr_n'(t)} {2 (2n+ 1 +\al+\bt)(\bt+r_n) r_n} .
\label{R-r-r*4}
\end{align}
\end{prop}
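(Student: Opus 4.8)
The plan is to extract $R_n$ and $1/R_n$ directly from the two quadratic relations \eqref{R-r-r*} and \eqref{R-r-r*2} by treating them as two linear equations in the two unknowns $R_n$ and $1/R_n$. Indeed, each of \eqref{R-r-r*} and \eqref{R-r-r*2} has the shape
\[
\frac{A_i}{R_n}\,(r_n^2+\bt r_n) + B_i\,R_n = C_i, \qquad i=1,2,
\]
where the coefficients $A_i,B_i,C_i$ are explicit polynomials in $n,\al,\bt,t,r_n,r_n^*$ and, in the second equation, also in $t\frac{d}{dt}r_n$. Setting $X:=R_n$ and $Y:=1/R_n$, we have a $2\times 2$ linear system
\[
\begin{pmatrix} B_1 & A_1(r_n^2+\bt r_n) \\ B_2 & A_2(r_n^2+\bt r_n) \end{pmatrix}
\begin{pmatrix} X \\ Y \end{pmatrix}
=
\begin{pmatrix} C_1 \\ C_2 \end{pmatrix},
\]
whose solution by Cramer's rule gives $R_n$ and $1/R_n$ as ratios of determinants. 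So the proof is essentially: write down the two equations from the previous proposition, subtract an appropriate multiple of one from the other to kill either the $R_n$ term or the $1/R_n$ term, and solve.

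Carrying this out concretely: to obtain \eqref{R-r-r*3}, I would eliminate the $1/R_n$ term. In \eqref{R-r-r*} the coefficient of $(r_n^2+\bt r_n)/R_n$ is $2n+1+\al+\bt$, while in \eqref{R-r-r*2} it is $\frac{1-(2n+\al+\bt)^2}{1} = (1-(2n+\al+\bt))(1+(2n+\al+\bt)) = -(2n-1+\al+\bt)(2n+1+\al+\bt)$. Thus multiplying \eqref{R-r-r*} by $(2n-1+\al+\bt)$ and adding it to \eqref{R-r-r*2} cancels the $1/R_n$ contribution; what remains is a linear equation for $R_n$ of the form $(\text{stuff})\,R_n = (\text{stuff})$, and dividing through yields \eqref{R-r-r*3}. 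For \eqref{R-r-r*4} I would instead eliminate the $R_n$ term: the coefficient of $R_n$ in \eqref{R-r-r*} is $\frac{1}{2n+1+\al+\bt}\big[(r_n^*-r_n)^2+(2n+\al-t)r_n^*+(\bt+t)r_n-nt\big]$ and in \eqref{R-r-r*2} it is $\big[(r_n^*-r_n)^2-t(r_n^*-r_n)+\bt r_n+(2n+\al)r_n^*-nt\big]$; note these bracketed quantities differ only by the term $-t(r_n^*-r_n)$ versus $(2n+\al-t)r_n^* + \bt r_n$ versus $(2n+\al)r_n^*+\bt r_n$, i.e. they coincide. Hence the two $R_n$-coefficients are in the ratio $1:(2n+1+\al+\bt)$, so $(2n+1+\al+\bt)\times$\eqref{R-r-r*} minus \eqref{R-r-r*2} eliminates $R_n$ and leaves a linear equation for $1/R_n$, which rearranges to \eqref{R-r-r*4}. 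The appearance of $(\bt+r_n)r_n$ in the denominator of \eqref{R-r-r*4} is exactly the factored form of $r_n^2+\bt r_n$ that multiplies $1/R_n$.

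The main obstacle is purely bookkeeping: one must verify that the two bracketed ``$R_n$-coefficient'' expressions in \eqref{R-r-r*} and \eqref{R-r-r*2} really do agree up to the factor $2n+1+\al+\bt$ (so that the elimination is clean and no spurious $r_n,r_n^*$ cross-terms survive), and likewise that the right-hand sides combine to give precisely the numerators shown, with the $t\frac{d}{dt}r_n$ terms appearing with the stated signs ($-tr_n'(t)$ in \eqref{R-r-r*3}, $+tr_n'(t)$ in \eqref{R-r-r*4}). This is a finite, deterministic computation with no conceptual difficulty, but it does require care with the many terms $nt$, $\al r_n^*$, $\bt r_n$, $(r_n^*-r_n)^2$, etc. I would organize it by first recording the abbreviations $Q:=r_n^2+\bt r_n$ and $S:=(r_n^*-r_n)^2+(2n+\al-t)r_n^*+(\bt+t)r_n-nt$ so that \eqref{R-r-r*} reads $(2n+1+\al+\bt)Q/R_n + S R_n/(2n+1+\al+\bt) = C$, and check that \eqref{R-r-r*2} reads $-(2n-1+\al+\bt)(2n+1+\al+\bt)Q/R_n + S R_n = C - (2n+\al+\bt)t\frac{d}{dt}r_n$; once in this normalized form, solving for $R_n$ and $1/R_n$ is immediate and the claimed formulas drop out.
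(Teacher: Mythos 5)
Your proposal is correct and is essentially the paper's own argument spelled out: the paper simply says the formulas are found by solving (\ref{R-r-r*}) and (\ref{R-r-r*2}) for $R_n$ and $1/R_n$, which is exactly your linear elimination. Your key observations check out — the bracketed $R_n$-coefficients in the two equations do coincide (since $-t(r_n^*-r_n)+\bt r_n+(2n+\al)r_n^*=(2n+\al-t)r_n^*+(\bt+t)r_n$), and the stated combinations $(2n-1+\al+\bt)\times(\ref{R-r-r*})+(\ref{R-r-r*2})$ and $(2n+1+\al+\bt)\times(\ref{R-r-r*})-(\ref{R-r-r*2})$ yield precisely (\ref{R-r-r*3}) and (\ref{R-r-r*4}).
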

\begin{proof}
These are found by solving for $R_n$ and $1/R_n$ from (\ref{R-r-r*})
and (\ref{R-r-r*2}).
\end{proof}

Finally we arrive at the following Theorem.

\begin{thm}
The logarithmic derivative of the Hankel determinant with respect to
$t;$
\begin{equation}
H_n(t):=t\frac{d}{dt}\ln D_n(t),\nonumber
\end{equation}
satisfies the following non-linear second order ordinary
differential equation
\begin{equation} \label{hn-ode}
( t H_n'')^2 = [n (n + \al + \bt) - H_n + (\al + t) H_n']^2 + 4 H_n'
(t H_n' - H_n) (\beta - H_n').
\end{equation}
\end{thm}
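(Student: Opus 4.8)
The plan is to eliminate the auxiliary quantities $r_n$, $r_n^*$ and $R_n$ in favour of $H_n$ and $H_n'$, using the expressions already assembled in Section~5, and then to recognise the resulting relation as \eqref{hn-ode}. Concretely, I would start from the two representations \eqref{R-r-r*3} and \eqref{R-r-r*4} of $R_n$ and $1/R_n$. Multiplying them gives the identity
\begin{equation*}
1 = \frac{(2n+1+\al+\bt)\,\bigl[2r_n^2+(t+2\bt-2r_n^*)r_n+(2n+\al)r_n^*-nt-tr_n'\bigr]\bigl[2r_n^2+(t+2\bt-2r_n^*)r_n+(2n+\al)r_n^*-nt+tr_n'\bigr]}{4(2n+1+\al+\bt)(\bt+r_n)r_n\,\bigl[(r_n^*-r_n)^2+(2n+\al-t)r_n^*+(\bt+t)r_n-nt\bigr]},
\end{equation*}
i.e. a relation with the $R_n$'s entirely removed. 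The numerator is a difference of squares, so it collapses to $\bigl(2r_n^2+(t+2\bt-2r_n^*)r_n+(2n+\al)r_n^*-nt\bigr)^2-(tr_n')^2$, and one is left with a clean polynomial identity among $r_n$, $r_n^*$, $tr_n'$ and $t$.

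Next I would substitute the closed forms \eqref{r*-hn} and \eqref{r-hn} for $r_n^*$ and $r_n$ in terms of $H_n$, $H_n'$; note that $tr_n' = t\frac{d}{dt}r_n$ then becomes an expression in $H_n$, $H_n'$ and $H_n''$ (differentiating \eqref{r-hn}, the only second-derivative term that survives is the $tH_n''$ coming from $t\frac{d}{dt}(tH_n')$). A convenient bookkeeping device is to work with $\textsf{p}_1(n)=r_n^*-r_n$ and with $r_n^*$ directly, since \eqref{hn-r-r*}--\eqref{hn-r*} give $(2n+\al+\bt)(r_n^*-r_n) = n(n+\al) - nt + H_n$ and $(2n+\al+\bt)r_n^* = nt + tH_n'$; subtraction gives $(2n+\al+\bt)r_n = n(n+\al)+tH_n'-H_n$. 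Plugging these into the de-$R_n$'d identity clears the common factor $(2n+\al+\bt)^2$ (this is why the $1-(2n+\al+\bt)^2$ and $2n+1+\al+\bt$ prefactors disappear from the final answer), and after collecting terms the $(tH_n'')^2$ appears on one side while the right-hand side organises into exactly $[n(n+\al+\bt)-H_n+(\al+t)H_n']^2 + 4H_n'(tH_n'-H_n)(\beta-H_n')$.

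The main obstacle is purely the algebra: keeping track of the many cancellations when substituting the linear-in-$n$ expressions for $r_n,r_n^*$ and verifying that the degree-two-in-$H_n''$ terms assemble into a perfect square $(tH_n'')^2$ with the correct sign, and that the remaining, $H_n''$-free part factors as stated. I would organise this by first computing the $O((H_n'')^0)$, $O((H_n'')^1)$ and $O((H_n'')^2)$ parts of the identity separately: the $O((H_n'')^1)$ part must vanish identically (otherwise the equation would not be quadratic in $H_n''$ alone), which is a useful consistency check, and the $O((H_n'')^2)$ part fixes the normalisation of the $(tH_n'')^2$ coefficient. It is worth remarking, as a sanity check against Section~3, that setting $t=0$ and using $H_n(0)=0$, $r_n^*(0)=0$, $r_n(0)=\tfrac{n(n+\al)}{2n+\al+\bt}$ reduces \eqref{hn-ode} to a trivial identity, and that the equation is indeed of Jimbo--Miwa--Okamoto $\sigma$-Painlev\'e~V shape after the standard change of variables, which is the content of the surrounding discussion.
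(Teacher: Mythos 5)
Your proposal is correct and follows essentially the same route as the paper: the paper's proof also multiplies \eqref{R-r-r*3} by \eqref{R-r-r*4} to eliminate $R_n$, obtaining a quadratic identity in $r_n$, $r_n^*$ and $tr_n'$, and then substitutes \eqref{r*-hn} and \eqref{r-hn} (with $tr_n'$ producing the $tH_n''$ term) to arrive at \eqref{hn-ode}. Your difference-of-squares observation and the bookkeeping via $\textsf{p}_1(n)=r_n^*-r_n$ are just tidy ways of organising the same algebra.
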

\begin{proof}
Multiplying (\ref{R-r-r*3}) and (\ref{R-r-r*4}) gives us
\begin{equation} \label{r-r*}
t^2 [r_n'(t)]^2 = t^2 r_n^2 + 2 r_n[- 2 (2n + \al + \bt) (r_n^*)^2 +
(4n + \al + 2 \bt)t r_n^* - n t^2] + [(2n + \al ) r_n^* - nt]^2.
\end{equation}
Substituting (\ref{r*-hn}) and (\ref{r-hn}) into (\ref{r-r*}), gives
us (\ref{hn-ode}).
\end{proof}

\begin{rem}
It turns out that $H_n$ satisfies the Jimbo-Miwa-Okamoto $\sigma$
form of $P_V$ for a special choice for parameters.

If
\begin{equation} \label{htil-def}
\widetilde{H}_n:=H_n-n(n+\al+\bt),
\end{equation}
then (\ref{hn-ode}) becomes
\begin{align}
(t
\widetilde{H}_n'')^2=&-4t(\widetilde{H}_n')^3+(\widetilde{H}_n')^2
\biggl[ 4\widetilde{H}_n +
(\al+2\bt+t)^2+4n(n+\al+\bt) -4\bt(\al+\bt) \biggr]\nonumber\\
& + 2 \widetilde{H}_n' \biggl[- (\al+2\bt+t)\widetilde{H}_n - 2 n
\bt(n+\al+\bt) \biggr]+\widetilde{H}_n^2.\nonumber
\end{align}
Comparing the above formula with the Jimbo-Miwa-Okamoto
\cite{JM,okamoto} $\sigma-$form of $P_V$, we can choose a possible
identification of the parameters of \cite{JM} to be
\begin{equation}
\nu_0=0,\;\nu_1=-(n+\al+\bt),\;\nu_2=n,\;\nu_3=-\bt.
\end{equation}
\end{rem}

When suitably limit is taken, (\ref{hn-ode}) can be reduced to the
$\sigma-$form of a Painlev\'e III in \cite{ci} . To see this, we
replace $x$ by $y/\bt,$ and $t$ by $s/\beta,$ in
$$
\int_{0}^{1} \: e^{-t/x}x^{\al}(1-x)^{\bt}P_n^2(x)dx=h_n(t),
$$
resulting
\begin{equation}\label{reduce}
\int_{0}^{\bt} \: e^{-s/y} y^{\al}
\left(1-\frac{y}{\bt}\right)^{\bt}\widetilde{P}_n^2(y)dy
=\bt^{2n+\al+1}\:h_n(s/\bt),
\end{equation}
where $\widetilde{P}_n(y)=\bt^n\:P_n(y/\bt).$ Now since the left
hand side of (\ref{reduce}) tends to
$$
\int_{0}^{\infty}\: e^{-s/y-y}y^{\al}\widetilde{P}_n^2(y)dy,
$$
as $\bt\to\infty,$ we see that
$$
\lim_{\bt\to\infty}\bt^{2n+1+\al}h_n(s/\bt)
$$
becomes the square of the $L^2$ norm of the orthogonal polynomials
studies in \cite{ci}. Consequently,
$$
\lim_{\bt\to\infty}\bt^{n(n+\al)}D_n(s/\bt)
$$
becomes the Hankel determinant
$$
\det\left(\int_{0}^{\infty}y^{i+j}\:{\rm
e}^{-s/y-y}y^{\al}dy\right)_{i,j=0}^{n-1}.
$$
Indeed replace $t$ by $s/\bt$ and let $\bt\to\infty,$ (\ref{hn-ode})
becomes, keeping only the highest order term in $\bt,$
\begin{equation}\label{scale}
(sH_n'')^2=(n+\al H_n')^2+4(sH_n'-H_n)H_n'(1-H_n'),
\end{equation}
which is (3.24) of \cite{ci} in a slightly different form.

Note that we have abused the notation: retaining $H_n$ after the
limit to avoid introducing extra symbols.


\section{Non-linear difference equation satisfied by $H_n$}

Based on recent papers \cite{basor-chen, ci}, we expect to find a
second order non-linear difference equation satisfied by $H_n.$ To
arrive at the difference equation, we want to express the recurrence
coefficients $\al_n$ and $\bt_n$ in terms of $H_n$ and $H_{n \pm
1}.$ First, let us find out the useful relation between $H_n$ and
$\textsf{p}_1(n)$.

\begin{lem}
We have
\begin{equation} \label{hn-p1n}
H_n = (2n + \alpha + \beta ) \textsf{p}_1(n) + n ( n + \alpha - t)
\end{equation}
or
\begin{equation}
\textsf{p}_1(n) = \frac{H_n - n ( n + \alpha - t)}{2n + \alpha +
\beta}.
\end{equation}

\begin{proof}
From (\ref{Rsum}), (\ref{dn-R*}) and the definition of $H_n$, we
have
\begin{align}
H_n & = t \frac{d}{dt} \ln D_n(t) = - \sum_{j=0}^{n-1} R^*_j \label{hn-sumR} \\
& = (2n + \alpha + \beta ) (r^*_n - r_n ) + n ( n + \alpha - t).
\end{align}
Using (\ref{p1n-rs}), rewriting the above formula gives our
proposition.
\end{proof}

\end{lem}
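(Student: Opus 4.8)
The plan is to obtain the relation purely from the two results already established in the excerpt, namely the expression
$$
\sum_{j=0}^{n-1} R_j^* = n(t-\al-n) - (2n+\al+\bt)(r_n^*-r_n)
$$
from (\ref{Rsum}) together with
$$
t\frac{d}{dt}\ln D_n(t) = -\sum_{j=0}^{n-1} R_j^*
$$
from (\ref{dn-R*}). Since $H_n$ is by definition $t\,\frac{d}{dt}\ln D_n(t)$, combining these two facts immediately gives
$$
H_n = -\sum_{j=0}^{n-1} R_j^* = -\,n(t-\al-n) + (2n+\al+\bt)(r_n^*-r_n) = (2n+\al+\bt)(r_n^*-r_n) + n(n+\al-t).
$$

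The final step is to invoke the ``very handy relation'' $\textsf{p}_1(n) = r_n^* - r_n$ from (\ref{p1n-rs}) in the preceding Remark, substituting it into the right-hand side to get $H_n = (2n+\al+\bt)\,\textsf{p}_1(n) + n(n+\al-t)$, which is exactly (\ref{hn-p1n}); solving this linear equation for $\textsf{p}_1(n)$ then yields the stated alternative form. I would lay this out in a short displayed \texttt{align} chain exactly as the excerpt telegraphs it — one line for $H_n = t\frac{d}{dt}\ln D_n = -\sum R_j^*$ and one line for the evaluated sum — and then append the one-line substitution of (\ref{p1n-rs}).

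Honestly, there is no real obstacle here: everything needed has already been proved, and the lemma is essentially a bookkeeping consequence of stringing together (\ref{Rsum}), (\ref{dn-R*}), and (\ref{p1n-rs}). The only thing to be careful about is a sign: $H_n$ equals \emph{minus} the sum $\sum_{j=0}^{n-1}R_j^*$, so the term $n(t-\al-n)$ in (\ref{Rsum}) flips to $-n(t-\al-n) = n(n+\al-t)$, and the sign in front of $(2n+\al+\bt)(r_n^*-r_n)$ flips from minus to plus. Getting that consistent with $\textsf{p}_1(n) = r_n^*-r_n$ (rather than $r_n - r_n^*$) is the one place a careless transcription could introduce an error, but since all three ingredients are stated verbatim above, it is a routine check rather than a genuine difficulty.
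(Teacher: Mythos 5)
Your proposal is correct and is essentially identical to the paper's own proof: both combine (\ref{Rsum}) with (\ref{dn-R*}) to write $H_n=-\sum_{j=0}^{n-1}R_j^*=(2n+\al+\bt)(r_n^*-r_n)+n(n+\al-t)$ and then substitute $\textsf{p}_1(n)=r_n^*-r_n$ from (\ref{p1n-rs}). The sign bookkeeping you flag is handled correctly, so nothing further is needed.
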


Note that since $\al_n=\textsf{p}_1(n)-\textsf{p}_1(n+1),$ we have a
very simple expression for $\al_n$ in terms of $H_{n}$ and
$H_{n+1}.$ Furthermore we can also obtain $R_n$ in terms of $H_n$ or
$\textsf{p}_1(n)$.
\begin{lem}
\begin{align}
R_n & = H_n - H_{n+1} + 2n + 1 + \al + \bt \\
& = (2n + \alpha + \beta ) \textsf{p}_1(n) - (2n + 2 + \alpha +
\beta) \textsf{p}_1(n+1) + t + \bt. \label{Rn-p1n}
\end{align}
\end{lem}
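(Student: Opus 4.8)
The plan is to express $R_n$ two ways and equate. First I would start from the recurrence-coefficient identity $\alpha_n = \textsf{p}_1(n) - \textsf{p}_1(n+1)$ from (\ref{alpha-p1n}), and from (\ref{r-r2}) together with (\ref{p1n-rs}) obtain $R_n = R_n^* + 2n+1+\alpha+\beta = (r_n^* - r_n) + (r_n^* - r_n) \cdots$; more directly, I would use (\ref{hn-p1n}) which says $H_n = (2n+\alpha+\beta)\textsf{p}_1(n) + n(n+\alpha-t)$, so that the combination $H_n - H_{n+1}$ telescopes nicely: since the quadratic and linear-in-$n$ pieces $n(n+\alpha-t)$ differ by a known polynomial, $H_n - H_{n+1} = (2n+\alpha+\beta)\textsf{p}_1(n) - (2n+2+\alpha+\beta)\textsf{p}_1(n+1) + [\,n(n+\alpha-t) - (n+1)(n+1+\alpha-t)\,]$, and the bracket evaluates to $-(2n+1+\alpha - t)$. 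Hence $H_n - H_{n+1} + 2n+1+\alpha+\beta = (2n+\alpha+\beta)\textsf{p}_1(n) - (2n+2+\alpha+\beta)\textsf{p}_1(n+1) + t + \beta$. This already matches the second line of the claimed formula, modulo identifying the left side with $R_n$.

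The remaining task is to show $R_n = H_n - H_{n+1} + 2n+1+\alpha+\beta$, i.e.\ that $R_n - (2n+1+\alpha+\beta)$ equals $(2n+\alpha+\beta)\textsf{p}_1(n) - (2n+2+\alpha+\beta)\textsf{p}_1(n+1) + t + \beta$. I would get this from ($S_1$)-derived relations: add (\ref{r-r3}) $r_{n+1}+r_n = (1-\alpha_n)R_n - \beta$ to (\ref{r-r1}) after eliminating $R_n^*$ via (\ref{r-r2}), or — cleaner — use the Lemma's own route. Actually the slickest path is: from (\ref{r-r2}), $R_n = R_n^* + 2n+1+\alpha+\beta$; from (\ref{hn-sumR}), $H_n - H_{n+1} = -\sum_{j=0}^{n-1}R_j^* + \sum_{j=0}^{n}R_j^* = R_n^*$. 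Therefore $R_n^* = H_n - H_{n+1}$ and $R_n = H_n - H_{n+1} + 2n+1+\alpha+\beta$, which is exactly the first displayed line. Then substituting $H_m = (2n+\alpha+\beta)\textsf{p}_1(m) + \cdots$ from (\ref{hn-p1n}) for $m = n, n+1$ and simplifying the polynomial-in-$n$ remainder gives the second line. That is the entire argument.

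The main obstacle — really the only place to be careful — is the bookkeeping of the explicit polynomial terms: one must verify that $n(n+\alpha-t) - (n+1)(n+1+\alpha-t) + (2n+1+\alpha+\beta) = t + \beta$, which is a short but sign-sensitive computation, and that the telescoping of $\sum R_j^*$ is aligned correctly with the index conventions ($H_n$ corresponds to $-\sum_{j=0}^{n-1}R_j^*$, so $H_n - H_{n+1} = +R_n^*$, not $-R_n^*$). Everything else is substitution. I would present the proof in two short steps: (i) $R_n^* = H_n - H_{n+1}$ from (\ref{hn-sumR}) and (\ref{r-r2}) gives the first equality; (ii) substitute (\ref{hn-p1n}) and collect terms to get the second.
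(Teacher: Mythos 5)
Your proposal is correct and follows essentially the same route as the paper, which proves the lemma by combining (\ref{r-r2}), (\ref{hn-sumR}) and (\ref{hn-p1n}): the telescoping $H_n - H_{n+1} = R_n^*$, the shift $R_n = R_n^* + 2n+1+\al+\bt$, and the substitution of (\ref{hn-p1n}) with the polynomial bookkeeping $n(n+\al-t)-(n+1)(n+1+\al-t)+(2n+1+\al+\bt) = t+\bt$ are exactly the paper's (unwritten) steps, and your sign checks are right.
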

\begin{proof}
These are found by combining (\ref{r-r2}), (\ref{hn-sumR}) and
(\ref{hn-p1n}) together.
\end{proof}
For the formulas involving $r_n$ and $\bt_n$, we have the results as
follows.
\begin{lem}
We have
\begin{equation} \label{rn-p1n}
r_n = \frac{- [\textsf{p}_1(n)]^2 - (2n + \al - t) \textsf{p}_1(n) +
nt - [1 - (2n + \al + \bt)^2] \beta_n}{2n + \al + \bt}
\end{equation}
with
\begin{equation} \label{beta-p1n}
\bt_n = \frac{X_n}{Y_n},
\end{equation}
where
\begin{equation}
X_n := \frac{1}{2n + \al + \bt } \biggl[ - 2 [\textsf{p}_1(n)]^3 +
(3t - \al + 2 \bt -2 n) [\textsf{p}_1(n)]^2 - (t^2 - 2 (n - \bt) t -
(2n + \al ) \bt) \textsf{p}_1(n) - (t + \bt) nt \biggr]
\end{equation}
and
\begin{align}
Y_n & := (2n -1 + \al + \bt) (2n + 1 + \al + \bt) + \frac{2}{2n + \al + \bt} \textsf{p}_1(n) \nonumber \\
 & + (2n -1 + \al + \bt) (2n + 2 + \al + \bt)\textsf{p}_1(n+1) - (2n + 1 + \al + \bt) (2n - 2 + \al + \bt)\textsf{p}_1(n-1)
 \nonumber \\
 & - (t +
\bt) \left( \frac{1}{2n + \al + \bt} + 2n + \al + \bt \right).
\end{align}
\end{lem}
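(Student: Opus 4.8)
The plan is to prove the two identities in turn. The first, (\ref{rn-p1n}), is essentially a rearrangement of a relation already in hand; the second, (\ref{beta-p1n}), requires eliminating $R_n$, $R_{n-1}$ and $r_n^*$ in favour of $\textsf{p}_1(n-1)$, $\textsf{p}_1(n)$, $\textsf{p}_1(n+1)$ and then solving a single \emph{linear} equation for $\bt_n$.

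For (\ref{rn-p1n}) I would start from (\ref{beta-r-r*}) and invoke (\ref{p1n-rs}) in the form $r_n^*=\textsf{p}_1(n)+r_n$. Substituting this into (\ref{beta-r-r*}), the square $(r_n^*-r_n)^2$ becomes $[\textsf{p}_1(n)]^2$, and the two $r_n$-proportional contributions on the right combine with coefficient $-(\bt+t)+(t-\al-2n)=-(2n+\al+\bt)$. The identity is then linear in $r_n$, and solving for $r_n$ gives (\ref{rn-p1n}) at once.

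For (\ref{beta-p1n}) the starting point is (\ref{r-r*-R}), which already has $R_n^*$ removed and $\bt_nR_nR_{n-1}$ replaced by $r_n^2+\bt r_n$. Writing again $r_n^*=\textsf{p}_1(n)+r_n$ and expanding $(t-2r_n^*)(n-r_n)$, a term $2r_n^2$ appears which cancels the $2r_n^2$ coming from $2(r_n^2+\bt r_n)$ on the right; the equation is thus linear in $r_n$, with $\bt_n$ entering only through the additive combination $\bt_n[(2n+1+\al+\bt)R_{n-1}+(2n-1+\al+\bt)R_n]$. Now substitute (\ref{rn-p1n}) for $r_n$ — which makes $r_n$ an affine function of $\bt_n$ — and replace $R_n$ and $R_{n-1}$ by their expressions from the preceding Lemma, namely (\ref{Rn-p1n}) and the same formula with $n\to n-1$, both of which are affine in $\textsf{p}_1(n-1),\textsf{p}_1(n),\textsf{p}_1(n+1)$ and free of $\bt_n$. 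The outcome is an equation linear in $\bt_n$; solving it yields $\bt_n=X_n/Y_n$, and reorganising numerator and denominator over the factor $2n+\al+\bt$ produces $X_n$ and $Y_n$ in the stated form.

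The only real work is in that last reorganisation: assembling the cubic in $\textsf{p}_1(n)$ that becomes $X_n$, and recombining $(2n+1+\al+\bt)R_{n-1}+(2n-1+\al+\bt)R_n$ with the leftover term — which, once (\ref{rn-p1n}) is used, carries the factor $1-(2n+\al+\bt)^2$ divided by $2n+\al+\bt$ — into the displayed $Y_n$, matching each coefficient exactly. This step is mechanical but lengthy, and is where sign and bookkeeping errors are easiest to make. Finally one should record that $2n+\al+\bt>0$ for every $n\ge 0$ under the standing hypotheses $\al,\bt>0$, so each division by $2n+\al+\bt$ is legitimate (and the passage from the linear equation to $\bt_n=X_n/Y_n$ of course presupposes $Y_n\neq 0$).
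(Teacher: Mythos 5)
Your proof is correct and it reproduces the displayed $X_n$ and $Y_n$ verbatim, but the elimination is routed through a different pair of identities than in the paper. The paper's proof combines (\ref{r&R}) with (\ref{p1n-rs}) to get $r_n^2+(2\textsf{p}_1(n)-t)r_n+[\textsf{p}_1(n)]^2-t\textsf{p}_1(n)=\bt_nR_n^*R_{n-1}^*$, subtracts (\ref{r*&R*}) to obtain a relation linear in $r_n$ and $\bt_n$, and then solves that together with (\ref{beta-r-r*}) as a $2\times 2$ linear system, invoking (\ref{r-r2}) and (\ref{Rn-p1n}) to remove $R_n^*,R_{n-1}^*,R_n,R_{n-1}$. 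You instead read (\ref{rn-p1n}) straight off (\ref{beta-r-r*}) (correct: with $r_n^*=\textsf{p}_1(n)+r_n$ that equation is linear in $r_n$ with coefficient $-(2n+\al+\bt)$), and then substitute it, together with (\ref{Rn-p1n}) and its $n\mapsto n-1$ shift, into (\ref{r-r*-R}), which descends from (\ref{r&R&R*}) rather than from (\ref{r&R}). The two eliminations are equivalent: in the paper, (\ref{beta-r-r*}) is itself obtained by subtracting (\ref{r-r*-R}) from the relation coming from (\ref{r&R}) and (\ref{r*&R*}), so the three linear relations in $(r_n,\bt_n)$ are linearly dependent and either pair pins down the same solution; carrying out your substitution gives numerator $-(2n+\al+\bt)X_n$ over denominator $-(2n+\al+\bt)Y_n$, i.e.\ literally the stated formula and not merely an expression equal to it in value. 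What your route buys is a cleaner first step ((\ref{rn-p1n}) falls out with no system to solve) at the price of leaning on (\ref{r&R&R*}), the identity the paper later substitutes into to obtain (\ref{final-difference}); since (\ref{beta-r-r*}) already encodes (\ref{r&R&R*}), neither derivation is more or less circular in that respect. Your closing caveats are the right ones: divisions by $2n+\al+\bt$ are harmless because $\al,\bt>0$, and $\bt_n=X_n/Y_n$ presupposes $Y_n\neq 0$, a genericity point the paper leaves implicit.
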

\begin{proof}
To obtain (\ref{rn-p1n}) and (\ref{beta-p1n}), first we use
(\ref{r&R}) and (\ref{p1n-rs}) to get
\begin{equation}
r_n^2 + (2 \, \textsf{p}_1(n) - t) r_n + [\textsf{p}_1(n)]^2 - t
\textsf{p}_1(n) = \beta_n  R^*_n R^*_{n-1}.
\end{equation}
Subtracting the above formula from (\ref{r*&R*}), we have
\begin{equation}
(2 \, \textsf{p}_1(n) - t - \bt) r_n + [\textsf{p}_1(n)]^2 - t
\textsf{p}_1(n) = \beta_n ( R^*_n R^*_{n-1} - R_n R_{n-1} ),
\end{equation}
which is a linear equation with respect to $r_n$ and $\bt_n$. Recall
that (\ref{beta-r-r*}) is also linear in $r_n$ and $\bt_n$. Solving
this linear system and taking into account (\ref{r-r2}) and
(\ref{Rn-p1n}), we prove our lemma.
\end{proof}

Finally, we obtain the following theorem.

\begin{thm}

$\widetilde{H}_n$ satisfies the following non-linear second order
ordinary difference equation
\begin{align}
nt & - \frac{(2n + \al ) ( \widetilde{H}_n + n (\bt + t))}{2n + \al
+ \bt} + \frac{\widetilde{Z}_n}{2n + \al + \bt} \biggl[ -2n - \al -
t +
\frac{2 (\widetilde{H}_n + n (\bt + t))}{2n + \al + \bt} \biggr] \nonumber \\
 + & \frac{2 \widetilde{Z}_n^2}{(2n + \al + \bt)^2} = \frac{1}{Z_n} \biggl( -2 \widetilde{H}_n^2 + 2 \widetilde{H}_n (1 + \widetilde{H}_{n+1})
 + \widetilde{H}_{n+1} (2n - 1 +\al +\bt) \nonumber \\
 & \hspace{120pt} -  \widetilde{H}_{n-1} (2n + 1 + \al + \bt - 2 \widetilde{H}_n + 2 \widetilde{H}_{n+1}) \biggr) \nonumber \\
 & \hspace{40pt} \times \biggl( -  n t (\bt + t) + \frac{(\al \bt + 2 \bt (n - t) + (2 n - t) t) (\widetilde{H}_n + n (\bt + t))}{2n + \al + \bt}&
 \nonumber \\
 & \hspace{50pt} +  \frac{(-\al + 2 \bt - 2 n + 3 t) (
\widetilde{H}_n + n (\bt + t))^2}{(2n + \al + \bt)^2} - \frac{2(
\widetilde{H}_n + n (\bt + t))^3}{(2n + \al + \bt)^3}  \biggr),
\label{final-difference}
\end{align}
where
\begin{align*}
Z_n:= & (2n + \al + \bt) \biggl( (2n -1 + \al + \bt ) (2n + 1 + \al + \bt) -(2n + \al + \bt + \frac{1}{2n + \al + \bt})(\bt + t)  \\
 & - (2n + 1 + \al + \bt) (\widetilde{H}_{n-1} + (n-1 ) (\bt + t)) + \frac{2 ( \widetilde{H}_{n} + n (\bt + t))}{(2n + \al + \bt)^2} \\
 & + (2n-1 + \al + \bt) ( \widetilde{H}_{n+1} + (n+1) (\bt + t)) \biggr), \\
\widetilde{Z}_{n} : = & n t + \frac{(-2n -\al + t)(\widetilde{H}_n +
n (\bt + t))}{2n + \al + \bt} - \frac{(\widetilde{H}_n + n (\bt +
t))^2}{(2n + \al + \bt)^2} - \frac{1 - (2n + \al +
\bt )^2}{Z_n}\biggl(-n t (\bt + t)  \\
& + \frac{(\al \bt + 2 \bt (n - t) + (2 n - t) t)(\widetilde{H}_n +
n (\bt + t))}{2n + \al + \bt} + \frac{(-\al + 2 \bt - 2 n + 3
t)(\widetilde{H}_n + n (\bt +
t) )^2}{(2n + \al + \bt)^2} \\
& - \frac{2(\widetilde{H}_n + n (\bt + t) )^3}{(2n + \al + \bt)^3}
\biggr).
\end{align*}

\end{thm}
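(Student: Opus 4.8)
The plan is to express everything in sight in terms of $\widetilde H_n$ and its shifts $\widetilde H_{n\pm1}$ by substituting the already-established representations of the auxiliary quantities, and then to take the one difference relation among $r_n,\beta_n$ that has not yet been ``spent'' as a differential identity. Concretely, I would start from the two linear-in-$(r_n,\beta_n)$ equations exhibited in the proof of Lemma~2.?? (i.e. (\ref{beta-r-r*}) and the relation obtained by subtracting (\ref{r&R}) from (\ref{r*&R*}) after eliminating the $R^*$'s via (\ref{r-r2})), whose solution is precisely (\ref{rn-p1n})--(\ref{beta-p1n}). These give $r_n$ and $\beta_n$ as rational functions of $\textsf p_1(n)$, $\textsf p_1(n\pm1)$ and $t$; and (\ref{hn-p1n}) converts each $\textsf p_1(n+j)$ into $\widetilde H_{n+j}$ linearly (after shifting $n\mapsto n+j$ in $H_n=(2n+\al+\bt)\textsf p_1(n)+n(n+\al-t)$ and using (\ref{htil-def})). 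So $r_n$, $\beta_n$, and $R_n$ via (\ref{Rn-p1n}) all become explicit rational expressions in $\widetilde H_{n-1},\widetilde H_n,\widetilde H_{n+1},t$; these are exactly the quantities $Z_n$ and $\widetilde Z_n$ introduced in the statement, so the first task is really just bookkeeping: verify that $Z_n$ is (up to the stated prefactor) $R_n$ reassembled from $\beta_n R_{n-1}$, and $\widetilde Z_n$ is $r_n$ reassembled the same way.

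The second and decisive ingredient is the quadratic constraint on $R_n$ coming from $(S_2')$, namely (\ref{R-r-r*}) --- the member of Proposition~5.?? that does \emph{not} involve $t\,dr_n/dt$. Everything on the left of (\ref{final-difference}) should be read as one side of (\ref{R-r-r*}) with $R_n,r_n,r_n^*$ replaced by their $\widetilde H$-expressions, and the right side as the complementary side; the term $\tfrac1{Z_n}(\cdots)$ is precisely $\dfrac{2n+1+\al+\bt}{R_n}(r_n^2+\beta r_n)$ rewritten, using $\beta_n R_n R_{n-1}=r_n^2+\beta r_n$ from (\ref{r*&R*}) together with (\ref{beta-p1n}). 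Thus the proof is: substitute, substitute, and recognize (\ref{R-r-r*}). Because (\ref{R-r-r*}) was derived purely from the algebraic supplementary conditions (no $t$-derivatives), the resulting relation is genuinely a difference equation in $n$, with $t$ a parameter --- which is what we want.

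The main obstacle is not conceptual but organizational: one must choose a consistent normalization (I would work throughout with $\widetilde H_n + n(\beta+t)=(2n+\al+\bt)\textsf p_1(n)$, which is the combination that appears everywhere in $Z_n,\widetilde Z_n$) and then carry out a moderately large rational-function simplification, clearing the denominators $2n+\al+\bt$, $2n\pm1+\al+\bt$ and $R_n$ in the right order. The genuine risk is that $R_n$ enters through both (\ref{Rn-p1n}) (as a first difference of $\widetilde H$'s) and through $\beta_n R_{n-1}=( r_n^2+\beta r_n)/R_n$; one must check these two descriptions of $R_n$ are compatible, which is exactly where $\widetilde H_{n-1}$ gets pulled in and why $Z_n$ contains $\widetilde H_{n-1}$ linearly. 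I would verify consistency by also invoking the shifted version of (\ref{Rn-p1n}), i.e. $R_{n-1}=H_{n-1}-H_n+2n-1+\al+\bt$, so that $\beta_n=X_n/Y_n$ from (\ref{beta-p1n}) and $\beta_n R_n R_{n-1}=r_n^2+\beta r_n$ become an identity rather than a new hypothesis.

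Once the substitution is complete and the denominators are cleared, the identity (\ref{final-difference}) is forced by (\ref{R-r-r*}); so the last step is simply to state that equality. A useful sanity check along the way is the case $t=0$: then $\textsf p_1(n)=-\sum_{j<n}\al_j(0)$ is known in closed form from the remark following Lemma~3.??, $\widetilde H_n=-n(n+\al)-(2n+\al+\bt)\sum_{j<n}\al_j(0)+n(n+\al+\bt)$ reduces to an explicit rational function of $n$, and both sides of (\ref{final-difference}) must then agree identically in $n$ --- a finite computation that catches any sign or shift error.
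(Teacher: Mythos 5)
Your overall plan coincides with the paper's up to the last step: express $r_n$, $\beta_n$ and $R_n$ as rational functions of $\textsf{p}_1(n)$, $\textsf{p}_1(n\pm1)$ (hence of $\widetilde H_{n}$, $\widetilde H_{n\pm1}$ via (\ref{hn-p1n}) and (\ref{htil-def})) using (\ref{Rn-p1n})--(\ref{beta-p1n}), and substitute into one algebraic consequence of $(S_2')$. However, you substitute into the wrong consequence, and your term-by-term identification of (\ref{final-difference}) is incorrect. The stated equation is exactly (\ref{r&R&R*}) transcribed: writing $\widetilde H_n+n(\bt+t)=(2n+\al+\bt)\textsf{p}_1(n)$ and $\widetilde Z_n=(2n+\al+\bt)r_n$, the left-hand side of (\ref{final-difference}) reduces, via $r_n^*=\textsf{p}_1(n)+r_n$ from (\ref{p1n-rs}), to $(t-2r_n^*)(n-r_n)-\al r_n^*$; the prefactor $\frac{1}{Z_n}(\cdots)$ equals $X_n/Y_n=\beta_n$ (indeed $Z_n=(2n+\al+\bt)Y_n$ and the cubic bracket is $(2n+\al+\bt)X_n$); and the remaining factor in $\widetilde H_{n\pm1}$ is $R_n^*R_{n-1}+R_{n-1}^*R_n$, since (\ref{Rn-p1n}) gives $R_n=\widetilde H_n-\widetilde H_{n+1}$ and (\ref{r-r2}) gives $R_n^*=R_n-(2n+1+\al+\bt)$. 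By contrast, you claim the left side of (\ref{final-difference}) is one side of (\ref{R-r-r*}) and that the $\frac{1}{Z_n}(\cdots)$ term is $\frac{2n+1+\al+\bt}{R_n}(r_n^2+\bt r_n)=(2n+1+\al+\bt)\beta_nR_{n-1}$. Neither holds: the right-hand side of (\ref{R-r-r*}) equals $2(r_n^2+\bt r_n)$ minus $(t-2r_n^*)(n-r_n)+\al r_n^*$, so the sides you propose to match differ by $2(r_n^2+\bt r_n)$ and a sign, and the full product on the right of (\ref{final-difference}) is $\beta_n(R_n^*R_{n-1}+R_{n-1}^*R_n)$, not $(2n+1+\al+\bt)\beta_nR_{n-1}$.

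Consequently your concluding step, ``substitute and recognize (\ref{R-r-r*}),'' cannot close. Substituting your representations into (\ref{R-r-r*}) would indeed yield a valid difference relation (every identity used holds for the actual orthogonal-polynomial quantities), but since (\ref{R-r-r*}) already has $\beta_n$ eliminated through (\ref{beta-r-r*}), the resulting equation has a genuinely different shape from the one asserted, and proving the theorem as stated would require showing the equivalence, i.e.\ undoing those eliminations back to (\ref{r&R&R*}). The direct route, which is the paper's, is to substitute (\ref{r-r2}), (\ref{p1n-rs}) and (\ref{Rn-p1n})--(\ref{beta-p1n}) into (\ref{r&R&R*}) itself and then pass from $\textsf{p}_1(n)$ to $\widetilde H_n$. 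The preparatory part of your proposal (the two equations linear in $(r_n,\beta_n)$, their solution (\ref{rn-p1n})--(\ref{beta-p1n}), and the $t=0$ sanity check) is sound; only the choice of the final identity, and hence the recognition step, needs to be corrected.
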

\begin{proof}

The non-linear difference equation for $\textsf{p}_1 (n)$ is
obtained by substituting (\ref{r-r2}), (\ref{p1n-rs}) and
(\ref{Rn-p1n})--(\ref{beta-p1n}) into (\ref{r&R&R*}). Due to the
relation among $\textsf{p}_1 (n)$, $H_n$ and $\widetilde{H}_n$ in
(\ref{htil-def}) and (\ref{hn-p1n}), the non-linear difference
equation for $\widetilde{H}_n$ follows.
\end{proof}


\section{$P_V((2n+1+\al+\bt)^2/2,-\bt^2/2,\al,-1/2)$}

We end this paper with the derivation of a second order ordinary
differential equation for $R_n$ which is expected to a $P_V$ since
we have seen that $H_n$ satisfies the Jimbo-Miwa-Okamoto
$\sigma-$form.

For this purpose we state in the next Lemma a Riccati equation
satisfied by $R_n$.
\begin{lem}
The auxiliary quantity $R_n(t)$ satisfies the following Riccati
equation,
\begin{equation}\label{Rn-ode1}
t R_n' = 2 R_n (r^*_n - r_n) + (2n + 1 + \al + \bt) (2 r_n -R_n +
\bt) + (R_n - \bt - t)R_n.
\end{equation}
\end{lem}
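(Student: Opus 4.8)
The plan is to derive the Riccati equation from the Toda-type equation $(T_2)$ for $\beta_n$, already expressed purely in terms of $R_n$, $r_n$ and $r_n^*$ in \eqref{beta-diff}, together with the algebraic relations \eqref{r&R}, \eqref{r*&R*}, \eqref{r-r2} connecting $\beta_n$, $R_n$, $R_n^*$, $R_{n-1}$, $r_n$, $r_n^*$. The key observation is that the left-hand side of \eqref{Rn-ode1}, namely $tR_n'$, can be reached by logarithmic differentiation: since $\beta_n R_n R_{n-1} = r_n^2 + \beta r_n$ by \eqref{r*&R*}, and since similarly $\beta_{n+1} R_{n+1}R_n = r_{n+1}^2 + \beta r_{n+1}$, one can isolate $R_n$ as a ratio of known quantities and differentiate.

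First I would apply $t\,d/dt$ to the logarithm of \eqref{r*&R*}, i.e. to $\ln\beta_n + \ln R_n + \ln R_{n-1} = \ln(r_n^2+\beta r_n)$, giving
\begin{equation}
t\frac{d}{dt}\ln\beta_n + t\frac{R_n'}{R_n} + t\frac{R_{n-1}'}{R_{n-1}} = \frac{(2r_n+\beta)\,t\,r_n'}{r_n^2+\beta r_n}.\nonumber
\end{equation}
This brings in $R_{n-1}'$, which is awkward; so instead I would avoid differentiating $R_{n-1}$ altogether. The cleaner route: use \eqref{Rn-p1n}, $R_n = (2n+\al+\bt)\textsf{p}_1(n) - (2n+2+\al+\bt)\textsf{p}_1(n+1) + t + \bt$, and differentiate directly with \eqref{p1n-diff}, $t\,\textsf{p}_1'(n) = r_n^*$. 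This yields immediately
\begin{equation}
t R_n' = (2n+\al+\bt)r_n^* - (2n+2+\al+\bt)r_{n+1}^* + t.\nonumber
\end{equation}
Now the task reduces to a purely algebraic identity: show the right-hand side above equals the right-hand side of \eqref{Rn-ode1}. To eliminate $r_{n+1}^*$ I would use \eqref{r-r1}, $r_{n+1}^* + r_n^* = t - \alpha_n R_n^*$, so that $r_{n+1}^* = t - \alpha_n R_n^* - r_n^*$, and then \eqref{r-r2}, $R_n^* = R_n - (2n+1+\al+\bt)$, to replace $R_n^*$. This gives $tR_n'$ in terms of $r_n^*$, $R_n$, $\alpha_n$ and $n$; finally I substitute $\alpha_n$ from the Lemma \eqref{alpha-rs}, $(2n+2+\al+\bt)\alpha_n = 2(r_n^*-r_n)+R_n-\bt-t$, to remove $\alpha_n$.

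After these substitutions the claimed identity \eqref{Rn-ode1} becomes a rational identity in $R_n$, $r_n$, $r_n^*$, and $n$ (with parameters $\al,\bt,t$); the main obstacle is purely bookkeeping — carefully tracking the coefficient $(2n+2+\al+\bt)$ that appears when solving for $r_{n+1}^*$, since $\alpha_n$ carries that denominator, and verifying that it cancels so that no spurious $r_{n+1}^*$ or $\textsf{p}_1(n+1)$ survives. I expect the expansion of $(2n+2+\al+\bt)\alpha_n R_n^*$ using both \eqref{alpha-rs} and \eqref{r-r2} to produce exactly the three groups $2R_n(r_n^*-r_n)$, $(2n+1+\al+\bt)(2r_n-R_n+\bt)$ and $(R_n-\bt-t)R_n$ after collecting terms; confirming this cancellation is the only real content. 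An alternative cross-check, should the direct route prove error-prone, is to differentiate \eqref{R-r-r*3} using \eqref{r*-hn}, \eqref{r-hn} and the ODE \eqref{hn-ode} for $H_n$, which must give the same Riccati equation.
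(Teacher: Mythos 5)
Your proposal is correct and is essentially the paper's own argument in different clothing: since \eqref{Rn-p1n} is just \eqref{alpha-rs} rewritten via \eqref{alpha-p1n} and \eqref{p1n-rs}, differentiating it with \eqref{p1n-diff} gives exactly the intermediate identity $tR_n'=(2n+\al+\bt)r_n^*-(2n+2+\al+\bt)r_{n+1}^*+t$ that the paper reaches by applying $t\,d/dt$ to \eqref{alpha-rs} and invoking $(T_1)$, after which both arguments eliminate $r_{n+1}^*$ with \eqref{r-r1}, \eqref{r-r2} and \eqref{alpha-rs}. The final cancellation you flag does work out and yields \eqref{Rn-ode1} exactly.
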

\begin{proof}
First we apply $t\frac{d}{dt}$ to equation (\ref{alpha-rs}) and make
use of $(T_1)$ to replace $t\frac{d }{dt} \al_n$ by
$r_n^*-r_{n+1}^*.$

In the next step we replace $r_{n+1}^*$ by $t-\al_nR_n^*-r_n$ using
(\ref{r-r1}). Finally, noting (\ref{alpha-rs}) and
$R_n^*=R_n-(2n+1+\al+bt),$ we arrive at (\ref{Rn-ode1}).
\end{proof}
From (\ref{Rn-ode1}) we see that
\begin{equation} \label{rn*-R}
r^*_n = \frac{1}{2 R_n} \biggl[ t R_n' - (2n + 1 + \al + \bt) (2 r_n
- R_n + \bt )\biggr] + r_n - \frac{R_n - \bt - t}{2}.
\end{equation}
Substituting the above formula into (\ref{R-r-r*3}) and
(\ref{R-r-r*4}), we find a pair of linear equations in $r_n$ and
$r_n'$. Solving this system we have
\begin{align}
r_n  = &F(R_n, R_n'), \label{rn-rn'1}\\
r_n' = &G(R_n, R_n'), \label{rn-rn'2}
\end{align}
where $F(\cdot, \cdot)$ and $G(\cdot, \cdot)$ are functions that are
explicitly known. Because the expressions are unwieldy, we have
decided not to write them down.

By equating the derivative of (\ref{rn-rn'1}) with respect to $t$
and (\ref{rn-rn'2}), we find, after some simplification the
following:
\begin{align}
&[(2n+\al+\bt)(2n+1+\al+\bt)-(4n+2\al+2\bt+1)R_n+R_n^2-tR_n']\nonumber\\
& \times \biggl[2t^2(2n+1+\al+\bt-R_n)R_n\:R_n''+t^2(3R_n-2n-1-\al-\bt)(R_n')^2\nonumber\\
&+2t(2n+1+\al+\bt-R_n)R_n\:R_n'+R_n^5-3(2n+1+\al+\bt)R_n^4\nonumber\\
&+C_1(t)R_n^3+C_2(t)R_n^2
-3\bt^2(2n+1+\al+\bt)^2\:R_n+\bt^2(2n+1+\al+\bt)^3\biggr]=0,
\label{two-ode}
\end{align}
where
\begin{align}
C_1(t): =& -t^2 + 2 \al t + 3 (2n+1+\al+\bt)^2 - \bt^2 \nonumber\\
C_2(t): =& -(2n+1+\al+\bt) \biggl[  t^2 + 2 \al t + (2n+1+\al+\bt)^2
- 3 \bt^2 \biggr].\nonumber
\end{align}
From (\ref{two-ode}) we have two equations, one of which is a
Riccati equation whose solution is
\begin{equation}
R_n(t)=\frac{(2n+\al+\bt)\gamma_n t-(2n+1+\al+\bt)}{\gamma_n t-1},
\end{equation}
where $\gamma_n$ is an integration constant. Note that $R_n(t)$
tends to $2n+\al+\bt$ as $t\to\infty.$ Because $R_0(t)\sim t$ from
Remark (\ref{R0-asy}), we discard this equation.

It turns out that the above differential equation for $R_n(t)$ is a
Painlev\'e V.
\begin{thm}
Let
\begin{equation}
S_n(t) := \frac{R_n(t)}{2n+1+\al+\bt}.
\end{equation}
Then $S_n(t)$ satisfies the following differential equation
\begin{align}
S_n'' = & \; \frac{3 S_n - 1}{2 S_n (S_n - 1)} (S_n')^2 -
\frac{S_n'}{t}
+ \frac{(S_n-1)^2}{t^2} \left[\frac{(2n+1+\al+\bt)^2}{2}\:S_n-\frac{\bt^2}{2S_n}\right] \nonumber \\
& + \frac{\al S_n}{t} - \frac{S_n (S_n + 1) }{2 (S_n - 1)},
\label{w-ode}
\end{align}
which is $P_V((2n+1+\al+\bt)^2/2,-\frac{\bt^2}{2},\al,-1/2).$
\end{thm}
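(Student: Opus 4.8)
The starting point is equation (\ref{two-ode}), which the paragraph just before the theorem has already pruned: its first factor is a Riccati equation whose explicit general solution tends to $2n+\al+\bt$ as $t\to\infty$, in conflict with $R_0(t)\sim t$ from Remark~\ref{R0-asy}, so that branch is discarded and $R_n$ is forced to satisfy the vanishing of the long bracket in (\ref{two-ode}). Writing $\kappa:=2n+1+\al+\bt$ for brevity and inserting $C_1(t)=-t^2+2\al t+3\kappa^2-\bt^2$, $C_2(t)=-\kappa(t^2+2\al t+\kappa^2-3\bt^2)$, this equation reads
\begin{align*}
0 = {}& 2t^2(\kappa-R_n)R_n R_n'' + t^2(3R_n-\kappa)(R_n')^2 + 2t(\kappa-R_n)R_n R_n' \\
& + R_n^5 - 3\kappa R_n^4 + C_1(t)R_n^3 + C_2(t)R_n^2 - 3\bt^2\kappa^2 R_n + \bt^2\kappa^3 .
\end{align*}

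The plan is to substitute $R_n=\kappa S_n$, so that $R_n'=\kappa S_n'$ and $R_n''=\kappa S_n''$, and to use $\kappa-R_n=-\kappa(S_n-1)$ and $3R_n-\kappa=\kappa(3S_n-1)$. Then every term above acquires a common factor $\kappa^3$; dividing it out, the three derivative-bearing terms become $-2t^2S_n(S_n-1)S_n''+t^2(3S_n-1)(S_n')^2-2tS_n(S_n-1)S_n'$. For the algebraic remainder I would substitute $C_1(t)$ and $C_2(t)/\kappa$ and sort the coefficients into their $\kappa^2$-, $\bt^2$-, $t^2$- and $2\al t$-parts; collecting powers of $S_n$ and using the factorisations $S_n^5-3S_n^4+3S_n^3-S_n^2=S_n^2(S_n-1)^3$, $\;S_n^3-3S_n^2+3S_n-1=(S_n-1)^3$, $\;S_n^3+S_n^2=S_n^2(S_n+1)$, $\;S_n^3-S_n^2=S_n^2(S_n-1)$, the remainder collapses to $\kappa^2S_n^2(S_n-1)^3-\bt^2(S_n-1)^3-t^2S_n^2(S_n+1)+2\al t\,S_n^2(S_n-1)$.

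Dividing the resulting identity by $-2t^2S_n(S_n-1)$ and cancelling common factors solves for $S_n''$; the six terms produced are, term by term, $\dfrac{3S_n-1}{2S_n(S_n-1)}(S_n')^2$, $-\dfrac{S_n'}{t}$, $\dfrac{\kappa^2S_n(S_n-1)^2}{2t^2}$, $-\dfrac{\bt^2(S_n-1)^2}{2t^2S_n}$, $\dfrac{\al S_n}{t}$ and $-\dfrac{S_n(S_n+1)}{2(S_n-1)}$, which is exactly (\ref{w-ode}) once the third and fourth are recombined as $\dfrac{(S_n-1)^2}{t^2}\bigl[\tfrac{\kappa^2}{2}S_n-\tfrac{\bt^2}{2S_n}\bigr]$. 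To conclude I would compare with the canonical $P_V$ equation $w''=\bigl(\tfrac1{2w}+\tfrac1{w-1}\bigr)(w')^2-\tfrac{w'}{t}+\tfrac{(w-1)^2}{t^2}\bigl(Aw+\tfrac{B}{w}\bigr)+\tfrac{Cw}{t}+\tfrac{Dw(w+1)}{w-1}$, noting $\tfrac1{2w}+\tfrac1{w-1}=\tfrac{3w-1}{2w(w-1)}$, and read off $(A,B,C,D)=\bigl(\kappa^2/2,\,-\bt^2/2,\,\al,\,-1/2\bigr)$, i.e. $P_V\bigl((2n+1+\al+\bt)^2/2,\,-\bt^2/2,\,\al,\,-1/2\bigr)$.

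The only real work is the bookkeeping in the second step: splitting $C_1(t)$ and $C_2(t)/\kappa$ into the correct four pieces and spotting the perfect-cube factorisations that make the degree-five polynomial in $S_n$ collapse; no genuine obstacle arises, since the homogeneity in $\kappa$ is automatic. One minor point of rigour I would flag is that the discarding of the Riccati branch of (\ref{two-ode}) was justified from $R_0(t)\sim t$ only at $n=0$, so strictly one should add a sentence noting that $R_n$ depends analytically on the parameters and selects the same branch for every $n$ (or verify the large-$t$ behaviour of $R_n$ in general directly).
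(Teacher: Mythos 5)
Your proposal is correct and follows essentially the same route as the paper: discard the Riccati factor of (\ref{two-ode}) as argued just before the theorem, substitute $R_n=(2n+1+\al+\bt)S_n$ into the remaining second-order factor, divide out the common power of $2n+1+\al+\bt$ and the factor $-2t^2S_n(S_n-1)$, and read off the parameters of the canonical $P_V$ form; your term-by-term bookkeeping (including the $(S_n-1)^3$ factorisations) checks out. The paper's proof is just this substitution stated in one line, so you have simply made the same computation explicit.
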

\begin{proof}
The equation (\ref{w-ode}) follows if we substitute
$$
R_n(t)=(2n+1+\al+\bt)S_n(t)
$$
into the second order ODE implied by (\ref{two-ode}).
\end{proof}

\medskip
\noindent \textbf{Acknowledgement.} \vskip .3cm \noindent Yang Chen
is partially supported by EPSRC Grant $\sharp$ R27027. He would also
like to thank the School of Mathematical Sciences, Fudan University,
Shanghai, China, for their hospitality where this paper was
completed. Dan Dai is partially supported by K.U.Leuven research
grants OT/04/21 and OT/08/33, and by the Belgian Interuniversity
Attraction Pole P06/02. He would like to thank the ENIGMA programme
for support of his visit to the Department of Mathematics, Imperial
College London.


\end{document}